\newtheorem{theorem}{Theorem}[section]
\newtheorem{corollary}[theorem]{Corollary}
\newtheorem{proposition}[theorem]{Proposition}
\newtheorem{lemma}[theorem]{Lemma}
\theoremstyle{definition}
\newtheorem{definition}[theorem]{Definition}
\newtheorem{remark}[theorem]{Remark}
\newcommand{\x}{\times}
\newcommand{\bd}{\partial}
\newcommand{\cB}{{\mathcal{B}}}
\newcommand{\cO}{\mathcal O}
\newcommand{\cV}{\mathcal{V}} 
\newcommand{\cU}{\mathcal{U}}
\newcommand{\ZZ}{\mathbb{Z}}
\newcommand{\CC}{\mathbb{C}}
\newcommand{\RR}{\mathbb{R}}
\newcommand{\QQ}{\mathbb{Q}}
\DeclareMathOperator{\Id}{Id}
\renewcommand{\a}{\alpha}
\renewcommand{\b}{\beta}
\newcommand{\e}{\varepsilon}
\newcommand{\f}{\varphi}
\renewcommand{\l}{\lambda}
\newcommand{\s}{\sigma}
\renewcommand{\o}{\omega}
\newcommand{\p}{\phi}
\newcommand{\G}{\Gamma}
\renewcommand{\S}{\Sigma}
\newcommand{\orb}{\mathrm{orb}}
\newcommand{\ii}{\mathrm{i}}
\begin{document}

\title[Constructions of symplectic surfaces in symplectic $4$-manifolds]
{Constructions of symplectic surfaces in symplectic $4$-manifolds with transversal intersections}

\author{Vicente Mu\~noz}
\address{Departamento de Algebra, Geometr\'{\i}a y Topolog\'{\i}a, Universidad Complutense de Madrid, Plaza de Ciencias, Ciudad Universitaria, 28040 Madrid, Spain} 
\email{vicente.munoz@ucm.es}

\author{Juan Rojo}
\address{ETS Ingenieros Inform\'aticos, Universidad Polit\'ecnica de Madrid,
Campus de Montegancedo, 28660 Madrid, Spain}
\email{juan.rojo.carulli@upm.es}

\maketitle

\begin{abstract}
In the breakthrough paper \cite{Mu-jems}, it 
is constructed the first example of a simply connected compact 
$5$-manifold (aka.\ Smale-Barden manifold) which admits a K-contact structure but does not carry a Sasakian structure, thus settling the question raised 
as Open Problem 10.2.1 in \cite{BG}. 

In this paper we 
revise, refine and generalize the constructions of symplectic surfaces in a symplectic $4$-manifold with transversal intersections. These are needed to produce the ramification locus of
Seifert bundles over symplectic $4$-orbifolds that serve to produce
K-contact $5$-manifolds. 
\end{abstract}

\section{Introduction}
\label{sec:1}

A central question in geometry is to determine when a given manifold admits a specific geometric structure. 
Complex geometry provides numerous examples of compact manifolds with rich topology, and there is a number
of topological properties that are satisfied by K\"ahler manifolds \cite{ABCKT, DGMS}. If we forget about the integrability of the complex structure, then we are dealing with symplectic manifolds. There has been enormous interest in the construction of (compact) symplectic manifolds that do not admit K\"ahler structures,
and in determining its topological properties \cite{OT}.

In odd dimensions, the analogues of K\"ahler and symplectic manifolds are Sasakian and K-contact manifolds. 
A K-contact manifold is a $(2n+1)$-dimensional manifold $M$ endowed with a contact form $\eta\in \Omega^1(M)$, i.e.\ $\eta\wedge (d\eta)^n>0$ everywhere, and with an endomorphism $\Phi$ of $TM$ such that:
 \begin{itemize}
\item $\Phi^2=-\Id + \xi\otimes\eta$, where $\xi$ is the Reeb
vector field of $\eta$ (that is $i_\xi \eta=1$, $i_\xi d\eta=0$),
\item 
$d\eta (\Phi X,\Phi Y)\,=\,d\eta (X,Y)$, for all vector fields $X,Y$,
\item $d\eta (\Phi X,X)>0$ for all nonzero $X\in \ker \eta$, and
\item the Reeb field $\xi$ is Killing with respect to the
Riemannian metric 
 $g(X,Y)=d\eta (\Phi X,Y)+\eta (X)\eta(Y)$.
\end{itemize}
A Sasakian manifold $M$ is a K-contact manifold for which 
the Nijenhuis
tensor $N_{\Phi} (X, Y) = {\Phi}^2 [X, Y] + [\Phi X, \Phi Y] -  \Phi [ \Phi X, Y] - \Phi [X, \Phi Y]$
satisfies $N_{\Phi} \,=\,-{d}\eta\otimes \xi$.

Sasakian geometry has become an important and active subject since the treatise of Boyer and Galicki \cite{BG}, 
and there is much interest on constructing K-contact manifolds which do not admit Sasakian structures.
In higher dimensions, the problem is solved by means of topological obstructions, like the parity of $b_1$. When one moves to the case of simply connected manifolds, K-contact non-Sasakian 
examples of dimensions $\geq 7$ are constructed by using the rational homotopy type or the hard Lefschetz property. 
The problem of the existence of simply connected K-contact non-Sasakian compact manifolds in dimension $5$ is the hardest and a cornerstone in this area. It appears as Open Problem 10.2.1 in \cite{BG}:

\smallskip

{\it Are there Smale-Barden manifolds with K-contact but not Sasakian structures?}

\medskip

A simply connected compact $5$-manifold is called a {\it Smale-Barden manifold}. These manifolds are classified \cite{B,S} by their second homology group 
   \begin{equation*} 
  H_2(M,\ZZ)=\ZZ^k\oplus ( {\textstyle\bigoplus\limits_{p,i}}\, \ZZ_{p^i}^{c(p^i)}),
  \end{equation*}
where $k=b_2(M)$ and we have written as a direct sum of
cyclic groups of prime power order, and by the second 
Stiefel-Whitney map $w_2: H_2(M,\ZZ)\rightarrow\ZZ_2$. The fact that we have a classification of all 
Smale-Barden manifolds raises the {\em geography problem} of determining which Smale-Barden manifolds admit K-contact and which admit Sasakian structures.

A Sasakian manifold $M$ always admits a \emph{quasi-regular} Sasakian structure \cite{R}, that is it is a 
Seifert bundle over a cyclic K\"ahler orbifold $X$.
In the case of a $5$-manifold, $X$ is a singular
complex surface with cyclic quotient singularities, and the isotropy locus is formed by complex curves intersecting transversely. 
Analogously, a K-contact manifold admits a quasi-regular K-contact structure \cite{MT}, which is a Seifert bundle over an
almost-K\"ahler orbifold
(that is, a symplectic orbifold with a compatible almost complex structure, which always exists) with cyclic singularities, and the isotropy locus is formed by symplectic surfaces.

The topology of a simply connected $5$-manifold $M$ which is a Seifert bundle over a cyclic $4$-orbifold $X$ is determined in \cite[Therorem 36]{Mu}. The homology is 
 $$
 H_2(M,\ZZ)=\ZZ^k\oplus ({\textstyle\bigoplus\limits_i}\, 
 \ZZ_{m_i}^{2g_i}),
 $$
where $k=b_2(M)=b_2(X)-1$, and the isotropy locus has 
isotropy surfaces $D_i$ with multiplicities $m_i$, and genus $g_i=g(D_i)$. We have the conditions $H_1(X,\ZZ)=0$,  
$\gcd(m_i,m_j)=1$ if $D_i,D_j$ intersect, 
and $H^2(X,\ZZ)\to \mathop{\oplus}_i H^2(D_i,\ZZ_{m_i})$ is surjective.
There are some technical conditions on the Chern class of the
Seifert bundle $M\to X$, and on the orbifold fundamental group 
$\pi_1^{\orb}(X)$ to arrange that $M$ is simply-connected, and to characterize the Stiefel-Whiteny class $w_2(M)$, but we shall not enter into this.

 To produce K-contact Smale-Barden manifolds, one needs to construct symplectic  $4$-orbifolds with cyclic quotient
 singularities having symplectic surfaces 
 of given genus inside. If the isotropy coefficients are not coprime, these surfaces are forced to be
 disjoint (and linearly independent in homology). In particular
 the number of such surfaces in the isotropy locus is at most
 $b_2(X)=k+1$. The genus of the isotropy surfaces, the isotropy 
 coefficients, and whether they are disjoint, are read in 
 $H_2(M,\ZZ)$ by the above.
 
The question \cite[Open Problem 10.2.1]{BG} is solved in the breakthrough paper \cite{Mu-jems}:

\begin{theorem}[{\cite{Mu-jems}}] \label{thm:main}
There exists a Smale-Barden manifold $M$ which admits a K-contact structure but does not admit a Sasakian structure.
In concrete, there is some $N>0$ large enough, and distinct primes $p_{nm}>\max(3,n,m)$, $1\leq n,m\leq N$, so that
 \begin{equation*}
  H_2(M,\ZZ)= \ZZ^2 \oplus {\textstyle\bigoplus\limits_{n,m=1}^N }
 \left(\ZZ^{18n^2+2}_{p_{nm}}  \oplus \ZZ^{18m^2+2}_{p_{nm}^2} \oplus \ZZ^{20}_{p_{nm}^3}\right) .
 \end{equation*}
\end{theorem}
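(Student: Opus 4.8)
The plan is to realize $M$ as a Seifert bundle $M\to X$ over a $4$-dimensional orbifold $X$ with cyclic quotient singularities, and then to play the symplectic (K-contact) category against the K\"ahler (Sasakian) one: the same homological data $H_2(M,\ZZ)$ will be achievable over an almost-K\"ahler orbifold but obstructed over every projective one.

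For the K-contact half I would first build a symplectic orbifold $X$ with $b_2(X)=3$ and $H_1(X,\ZZ)=0$ whose isotropy locus consists of symplectic surfaces matching the torsion of the target group: for each pair $(n,m)$ a surface of genus $9n^2+1$ with multiplicity $p_{nm}$, a surface of genus $9m^2+1$ with multiplicity $p_{nm}^2$, and a surface of genus $10$ with multiplicity $p_{nm}^3$, so that the exponents $2g_i$ reproduce $18n^2+2$, $18m^2+2$ and $20$. Since surfaces whose multiplicities share the prime $p_{nm}$ must be disjoint while those with coprime multiplicities are allowed to meet, the construction must produce symplectic surfaces in prescribed homology classes, of prescribed genus, and with a prescribed transversal intersection pattern, starting from a controllable ambient symplectic $4$-manifold (a blow-up of $\CC P^2$ or a product of curves) and applying symplectic sums, genus-raising, and transversal resolution of intersections; this is exactly the toolkit developed in the body of this paper. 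I would then choose a Seifert bundle $M\to X$ whose orbifold Chern class is represented by the almost-K\"ahler form, so that \cite{MT} endows $M$ with a quasi-regular K-contact structure, and whose defining data satisfy the $\pi_1^{\orb}$ and surjectivity conditions of \cite[Theorem 36]{Mu} forcing $M$ to be simply connected; the same theorem then reads off $b_2(M)=b_2(X)-1=2$ together with the displayed torsion.

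For the non-Sasakian half I would argue by contradiction. A Sasakian structure on $M$ makes it a Seifert bundle over a \emph{projective} orbifold $X'$ with cyclic singularities, and $H_2(M,\ZZ)$ pins down the same invariants: $b_2(X')=3$, isotropy curves of genera $9n^2+1$, $9m^2+1$, $10$ with multiplicities $p_{nm},p_{nm}^2,p_{nm}^3$, and disjointness of the three curves attached to each prime $p_{nm}$. Now the classes of all these algebraic curves lie in the N\'eron--Severi group, of rank at most $b_2(X')=3$; adjunction ties each genus to the self-intersection and the canonical class, the Hodge index theorem constrains the mutually orthogonal (disjoint) triples, and the orbifold Bogomolov--Miyaoka--Yau inequality bounds the orbifold Chern numbers. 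Because the genera grow quadratically in $n,m$ while the number of primes grows like $N^2$ and $b_2(X')$ stays fixed at $3$, these algebraic constraints are violated once $N$ is large, contradicting the existence of $X'$.

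The main obstacle is the symplectic construction of the first half: producing symplectic surfaces of the exact, quadratically growing genera in controlled homology classes inside a fixed, small-$b_2$ orbifold while honoring all the transversality-versus-disjointness constraints dictated by the coprimality conditions. The delicate point is that the numbers $18n^2+2$, $18m^2+2$ and $20$ must be simultaneously realizable symplectically and non-realizable K\"ahlerianly; getting the symplectic genera and intersection pattern exactly right, which is what the constructions of symplectic surfaces with transversal intersections in this paper are designed to achieve, is where the real work lies, whereas the Sasakian obstruction is a comparatively formal consequence of surface theory once the data are fixed.
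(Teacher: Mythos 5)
Your outline does follow the same overall architecture as the paper's (and \cite{Mu-jems}'s) proof: quasi-regular Seifert bundles in both directions, a symplectic $4$-orbifold with $b_2=3$ and prescribed isotropy data for the K-contact half, and the impossibility of a complex model for the Sasakian half. But it misses the mechanism that the paper singles out as the engine of the whole argument, and this is exactly the point you concede at the end is unresolved. The orbifold $X$ is built with $b_2^+(X)=3$, i.e.\ positive-definite intersection form, and the $N^2$ triples of disjoint isotropy surfaces are not freely prescribed surfaces of the right genera: they are the multiples $\varepsilon_{nm}=(D_1^n,D_2^m,D_3)$, with $[D_1^n]=n[D_1]$ and $[D_2^m]=m[D_2]$, of three \emph{fixed} disjoint surfaces of positive self-intersection. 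Representing $n[D_1]$ by an embedded symplectic surface of genus $9n^2+1$ inside a tubular neighbourhood of $D_1$ (hence automatically disjoint from $D_2^m$ and $D_3$) is Proposition~\ref{prop:Cn} of this paper, and it requires $D_1^2>0$; this is where $b_2^+>1$ plays the crucial role, and it is precisely what no K\"ahler surface can imitate, since Hodge index forbids three disjoint curves of positive square. Without this device, ``producing $N^2$ disjoint triples of quadratically growing genera inside a fixed $b_2=3$ orbifold'' is not a technical obstacle to be delegated to a toolkit --- it is the theorem.

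The non-Sasakian half as you state it would not go through. ``Adjunction, Hodge index and orbifold Bogomolov--Miyaoka--Yau, hence contradiction for $N$ large'' is an assertion, not an argument, and two essential steps are missing. First, on an orbifold $Y$ the isotropy curves may pass through singular points, where self-intersections are rational and adjunction acquires correction terms; the proof therefore must begin by bounding the number of singular points of $Y$ \emph{universally} (independently of $Y$) and bounding the denominators of the rational intersection numbers, so that most of the disjoint triples avoid the singular locus and satisfy honest integer adjunction equalities. Second, the actual contradiction mechanism: each surviving triple is an orthogonal basis of $H_2(Y,\QQ)$; since $b_2^+(Y)=1$, these bases cannot all be proportional to (multiples of) a single fixed triple, unlike on the symplectic side, so one obtains many non-proportional orthogonal bases; expanding $K^2$ with respect to each of them, with $K\cdot D$ pinned down by the prescribed genera via adjunction, produces the system of Diophantine equalities and inequalities that becomes unsolvable for $N$ large. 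Your version gestures at the right objects but asserts rather than derives this step, and your closing judgement that the Sasakian obstruction is ``a comparatively formal consequence of surface theory'' inverts where the difficulty lies: that Diophantine analysis is the hard, novel part of \cite{Mu-jems}.
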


The strategy to prove Theorem \ref{thm:main} is as follows.
With constructions of symplectic geometry \cite{Gompf, 
CFM, McDuff}, we manage to obtain a symplectic orbifold with $b_2(X)=3$ for which 
$b_2^+(X)=3$, and for which there are $3$ disjoint symplectic surfaces
$D_1,D_2,D_3$. This is impossible for K\"ahler manifolds since in that case it always holds $b_2^+=1$. To translate the information of $b_2^+>1$ to the Seifert bundle $M\to X$, we take tuples of
$3$ disjoint symplectic surfaces $\varepsilon_{nm}=(D_1^n, D_2^m, D_3)$, parametrized
by natural numbers $n,m\geq 1$, where $[D_1^n]=n[D_1]$ and $[D_2^m]=m[D_2]$ (note that we could have used also multiples of $D_3$, but using two of the surfaces is enough). Taking the multiples
to obtain new surfaces is allowed since they have {\em positive} self-intersection. Here the fact that $b_2^+(X)>1$ plays a crucial role.

For each tuple $\varepsilon_{nm}=(D_1^n, D_2^m, D_3)$, we put
isotropy coefficients $p_{nm},p_{nm}^2,p_{nm}^3$, which are distinct and not coprime, in accordance with the fact that these are $3$ disjoint symplectic surfaces generating $H_2(X, \QQ)$.
The genera are computed easily from the genera of $D_1,D_2,D_3$ by the adjunction formulas. Therefore the manifold of Theorem \ref{thm:main} is obtained as a Seifert bundle once we fix $N>0$ and take the 
isotropy locus to be all curves in $\varepsilon_{nm}$ for $1\leq n,m\leq N$, with the given isotropy coefficients.

The proof that the $5$-manifold $M$ in Theorem \ref{thm:main} does not admit a Sasakian structure requires to check that
there is no singular complex surface $Y$ with cyclic singularities with $b_2(Y)=3$, and a large number of tuples $\varepsilon'_{l}
=(D_1^l,D_2^l,D_3^l)$ consisting of $3$ disjoint complex curves, which
will form the isotropy locus of the Seifert bundle $M\to Y$.
First one needs to bound the number of singular points (universally, i.e.\ independently of $Y$). This serves to bound geometric quantities, like the Euler characteristic, $K^2$,
or the self-intersection of negative curves. For orbifolds, the intersection and self-intersection numbers and $K^2$ can
be rational (instead of just integers), so it is useful to bound the denominators (independently of $Y$). 
As the number of singular points is bounded, one has that 
most of the tuples of disjoint complex curves avoid the singular points, and those satisfy the adjunction equality and have self-intersection which is an integer. 
For complex curves in a Kähler surface there are many more restrictions, and it would be a rare phenomenon that the tuples of curves were multiples of $3$ fixed curves (that is what we did for the symplectic orbifold $X$) because $b^+_2(Y)=1$. Noting that $\varepsilon'_{l}
=(D_1^l,D_2^l,D_3^l)$ gives an orthogonal basis of $H_2(Y,\QQ)$,
we have that there are many different orthogonal bases (non-proportional to each other), and writing $K^2$ with respect to each basis, we get a collection of
diophantine equalities and inequalities, which become impossible by 
choosing $N$ large enough.

\medskip

The purpose of the present contribution is to settle some ground work
on several questions on symplectic $4$-manifolds that is very useful
to make the constructions of cyclic symplectic $4$-orbifolds that may produce K-contact Smale-Barden manifolds
in the way they have been constructed in \cite{CMRV,Mu-jems, MRT}.

We need to produce large collections of symplectic surfaces in a symplectic $4$-manifold,
and they have to intersect transversally and positively. We will show how these collection of curves can be arranged when many of them intersect at a single point.
This can be used to perform a symplectic blow-up and separate them increasing $b_2$ only in one unit.

Next we shall prove that a collection of curves can be
made to intersect nicely (that is, like transverse intersection of complex curves), and only with 
pairwise intersections. Such collections are needed to be 
the ramification locus of a Seifert bundle over the $4$-manifold. 

In the next section, we provide K\"ahler models for neigbhourhoods of the patterns of symplectic surfaces constructed previously. 
This is very useful to apply techniques from complex geometry. For instance, when we have a 
chain of rational curves, this may be contracted to produce a cyclic singular point. This is needed for constructing 
symplectic cyclic $4$-orbifolds.

Finally, in section \ref{section:symplectic-divisors} we study two constructions of symplectic surfaces. The first one allows to construct
symplectic surfaces $C_n$ with $[C_n]=n\,[C]$, $n\geq 1$,
representing the multiples in homology of a single symplectic surface $C$ with positive self-intersection. This was used in the construction above of the basis $\e_{nm}$. 
The second construction allows to construct a symplectic surface representing a divisor $D=\sum a_iC_i$, in a neighbourhood of a collection of symplectic
surfaces $C_i$. This can be used to construct symplectic surfaces near a chain of rational curves that will be
blow-down to a cyclic singular point.

We hope that these constructions will be useful in symplectic geometry, and addressing the geography problem of
K-contact and Sasakian Smale-Barden manifolds. They certainly clarify some sloppy arguments and serve to complete the details appearing in previous works.

\medskip

\noindent {\bf Acknowledgements.} 
The first author was partially supported 
by Project MINECO (Spain)  PID2020-118452GB-I00.


\section{Model for many 
symplectic surfaces intersecting at a point}

The purpose of this and next sections is to give various constructions of symplectic surfaces in a symplectic $4$-manifold $(X,\omega)$, which have transversal intersections. These can be useful to produce isotropy locus of Seifert fibrations $M\to X$ over symplectic $4$-manifolds, which serve to construct K-contact $5$-manifolds $M$ with given homology $H_2(M,\ZZ)$. 

The constructions are local, that is, we do not assume $X$ to be compact. This implies in particular that they can be used for the case where $X$ is a cyclic singular symplectic $4$-manifold, by taking the smooth locus $X^o=X-P$, where $P\subset X$ is the collection of (finitely many) singular points. 
The constructions are useful in symplectic geometry {\em per se},
for the understanding of symplectic $4$-manifolds.

\medskip

Let $(X,\o)$ be a smooth symplectic $4$-manifold.
A collection of symplectic surfaces $S_i \subset X$ intersect transversely and positively if for $i \ne j$, at each $p \in S_i \cap S_j$ 
we have $T_p S_i \oplus T_p S_j=T_pX$ with the orientation of $T_pX$ given by $\cB_i \cup \cB_j$, with $\cB_i, \cB_j$ positive bases for $S_i, S_j$.
Here a symplectic surface $S \subset X$ 
is always oriented by taking the
symplectic form $\o_{S} =\o|_S$.

We want to understand what happens when we have several 
symplectic surfaces $S_i\subset X$, $0\leq i\leq l$, 
intersecting transversely and positively at a point. 
We aim for a model with complex equations.

\begin{definition} \label{def:complex-like}
 Let $S_i\subset X$, $0\leq i\leq l$, 
 be a collection of symplectic surfaces
 intersecting 
 at a point $p\in X$. We say that they have a complex-like
 intersection if there is a Darboux chart $(z,w)\in \CC^2$
 so that $S_i=\{w=a_i z\}$, where $a_i$ are distinct complex numbers.
\end{definition}

Recall that the standard symplectic structure of $\CC^2=\RR^4$ is given by $\o_0=\frac{\ii}{2}(dz \wedge d \bar z+dw \wedge d \bar w)$.
We start with the following linear algebra results for symplectic subspaces.

\begin{lemma} \label{lem:linear-algebra-1}
In $(\CC^2,\o_0)$, 
consider the planes 
$S_0=\{w=0\}$ and $S_1=\{w=az+b \bar z\}$, 
where $a,b\in\CC$. We have
\begin{enumerate}
    \item[(1)] $S_1$ is symplectic $\iff |a|^2-|b|^2+1 \ne 0$.
    \item[(2)] $S_0$, $S_1$ intersect transversely $\iff |a|^2-|b|^2 \ne 0$.
    \item[(3)] $S_1$ is symplectic and $S_0$, $S_1$ intersect transversely and positively 
    $\iff |a|^2-|b|^2 \in (-\infty,-1) \cup (0,\infty)$.
\end{enumerate}
\end{lemma}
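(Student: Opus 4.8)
The plan is to pass to real coordinates and reduce all three statements to the behaviour of a single real number. Writing $z=x_1+\ii y_1$ and $w=x_2+\ii y_2$, I would first record that $\o_0=dx_1\wedge dy_1+dx_2\wedge dy_2$ is the standard form on $\RR^4$, so its orientation is the standard one. Next I parametrize $S_1=\{w=az+b\bar z\}$ by $z=s+\ii t\mapsto(z,az+b\bar z)$ and differentiate in $s$ and $t$ to obtain an explicit real basis $e_1=(1,a+b)$, $e_2=(\ii,\ii(a-b))$ of $S_1$ in the $(z,w)$ coordinates, while $S_0$ carries the basis $f_1=(1,0)$, $f_2=(\ii,0)$. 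The quantity that controls everything is $\Delta:=|a|^2-|b|^2$, which a one-line computation (writing $a,b$ in real and imaginary parts) identifies as the Jacobian determinant of the $\RR$-linear map $L\colon z\mapsto az+b\bar z$.

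For (1) I would evaluate $\o_0$ on the basis of $S_1$; a direct calculation gives $\o_0(e_1,e_2)=\Delta+1$, so $S_1$ is symplectic exactly when $\Delta+1\neq0$. For (2) I observe that $S_0\cap S_1$ is the set of $z$ with $az+b\bar z=0$, i.e.\ $\ker L$; since $S_0,S_1$ are $2$-planes in a $4$-space, transversality is equivalent to $S_0\cap S_1=0$, which holds iff $L$ is invertible, i.e.\ iff $\Delta\neq0$.

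The only delicate point is (3), which is pure orientation bookkeeping. Positive transversality asks that a positively oriented basis of $S_0$ followed by one of $S_1$ recover the standard orientation of $\RR^4$, where ``positively oriented'' on each plane means with respect to its own symplectic form. The basis $(f_1,f_2)$ is already $\o_0$-positive on $S_0$, while the sign computation from (1) shows $(e_1,e_2)$ is $\o_0$-positive on $S_1$ when $\Delta+1>0$ and $\o_0$-negative when $\Delta+1<0$; so the positive frame of $S_1$ is $(e_1,e_2)$ in the first case and $(e_2,e_1)$ in the second. I then compute the determinant of the real $4\times4$ matrix with columns $f_1,f_2,e_1,e_2$; it is block upper triangular and again equals $\Delta$. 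Tracking the cases: when $\Delta+1>0$, positivity forces $\Delta>0$, giving $\Delta\in(0,\infty)$; when $\Delta+1<0$, the reordering flips the sign and positivity forces $\Delta<0$, which combined with $\Delta+1<0$ yields $\Delta\in(-\infty,-1)$. Their union is exactly $(-\infty,-1)\cup(0,\infty)$; the missing interval $(-1,0)$ is where the intersection is transverse but negatively oriented.

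I expect the main obstacle to be keeping the orientation conventions straight in (3): one must hold the ambient symplectic orientation of $\CC^2$, the symplectic orientation of $S_1$ (which reverses as $\Delta$ crosses $-1$), and the sign of the concatenated frame consistent at the same time. A useful internal check is that the same invariant $\Delta$ — the Jacobian of $L$ — appears in all three parts, both as $\o_0(e_1,e_2)-1$ and as the two determinants, and this is precisely what makes the answer break at the two special values $\Delta=-1$ (loss of the symplectic condition) and $\Delta=0$ (loss of transversality).
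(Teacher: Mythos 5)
Your proof is correct, and in substance it is the same linear-algebra reduction as the paper's: everything is controlled by $\Delta=|a|^2-|b|^2$ and $\Delta+1$. The difference is one of language and bookkeeping. The paper works with complex exterior algebra throughout: where you compute $\o_0(e_1,e_2)=1+\Delta$ on your explicit real frame, the paper computes $\o_0|_{S_1}=(1+|a|^2-|b|^2)\,\ii\,dz\wedge d\bar z$; where you test transversality via $\ker$ of the real-linear map $L(z)=az+b\bar z$ and positivity via the determinant of the concatenated $4\times 4$ real frame, the paper evaluates the volume form $\o_0^2$ on the concatenated bases, using the identity $\ii\,dw\wedge d\bar w|_{S_1}=(|a|^2-|b|^2)\,\ii\,dz\wedge d\bar z|_{S_1}$ (these are of course the same determinant in disguise). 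The one structural divergence is in part (3): the paper avoids your case split on the sign of $\Delta+1$ by packaging both orientation conditions into the single requirement $\frac{1+|a|^2-|b|^2}{|a|^2-|b|^2}>0$, which immediately gives $(-\infty,-1)\cup(0,\infty)$, whereas you track the flip of the symplectic orientation of $S_1$ across $\Delta=-1$ and reorder its frame by hand. Your route buys two things the paper's does not make explicit: the identification of $\Delta$ as the Jacobian of $L$, which explains conceptually why one number governs all three statements, and the geometric meaning of the excluded interval $(-1,0)$ as transverse-but-negative intersection; the paper's route buys brevity and eliminates precisely the orientation case-tracking where sign errors most easily occur.
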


\begin{proof}
A simple computation gives 
 $$
 \o_0|_{S_1}=(1+|a|^2-|b|^2) \ii\,  dz\wedge d \bar z \, ,
 $$
and this proves (1).

For (2), the condition that $S_0$, $S_1$ intersect transversely is the same as asking that 
$0 \ne \o_0^2(\cB_{0}, \cB_{1}) 
=\ii\, dw \wedge d \bar w(\cB_1)$,
with $\cB_0$ is the standard basis of $S_0$, and $\cB_1$ any basis of $S_1$. This translates to 
\[
0 \ne \ii\, dw \wedge d \bar w|_{S_1} =(|a|^2-|b|^2) \ii\, dz \wedge d \bar z|_{S_1}\, ,
\]
i.e. $|a|^2-|b|^2 \ne 0$. 

Finally for (3), $S_0$ and $S_1$ intersect 
transversely and positively if and only if 
$0 < \o_0^2(\cB^+_{0}, \cB^+_{1}) =\ii\, dw \wedge d \bar w(\cB^+_1)$,
where $\cB_0^+$ the standard basis of $S_0$, and $\cB_1^+$ a positive
basis for $S_1$, that is a basis so that  
\[
0<\o_0|_{S_1}(\cB_1^+)= (1+|a|^2-|b|^2) \ii \,dz \wedge 
d \bar z (\cB_1^+) = \frac{1+|a|^2-|b|^2}{|a|^2-|b|^2} \ii \,
dw \wedge d \bar w (\cB_1^+),
\]
and this translates to the condition  $\frac{1+|a|^2-|b|^2}{|a|^2-|b|^2}>0$.
\end{proof}

It is worth mentioning that the orientation of $S_1$ as a symplectic surface may not coincide with the orientation as a graph. It is interesting to see when this happens:
\begin{lemma} \label{lem:graphs}
Let $S_0=\{w=0\}$ and $S_1=\{w=az+b\bar z\}$ be planes of $\CC^2$. Assume both surfaces are endowed with the orientation as graphs on $z$. Then, $S_1$ and $S_0$ intersect transversely and positively if and only if $|a|^2-|b|^2>0$.
\end{lemma}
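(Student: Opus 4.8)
The plan is to reduce the positivity of the intersection, computed with respect to the graph orientations, to the sign of a single scalar, reusing the computations already carried out in the proof of Lemma \ref{lem:linear-algebra-1}. The only genuinely new ingredient is the observation in the remark preceding the statement: on $S_1$ the orientation induced by the symplectic form and the orientation as a graph over $z$ need not agree, so I must be careful to use the latter throughout.

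First I would fix positive graph bases. Since both $S_0$ and $S_1$ are graphs over the $z$-plane, a basis $\cB_i$ is positively oriented as a graph exactly when $\ii\, dz \wedge d\bar z(\cB_i)>0$; for $S_0$ this is the standard basis, and for $S_1$ it is the image of the standard $z$-basis under the parametrization $z\mapsto (z, az+b\bar z)$. Note that for $S_0$ the graph and symplectic orientations coincide, so no sign issue arises there, and the whole asymmetry of the problem is concentrated on $S_1$.

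Next, recall from the proof of Lemma \ref{lem:linear-algebra-1}(2) that for $\cB_0$ a positive basis of $S_0$ and any basis $\cB_1$ of $S_1$ one has $\o_0^2(\cB_0,\cB_1)$ equal to $\ii\, dw\wedge d\bar w(\cB_1)$ up to a positive constant (only the term $dz\wedge d\bar z\wedge dw\wedge d\bar w$ survives in $\o_0^2$, and its $dz\wedge d\bar z$ factor absorbs $\cB_0$ with positive sign), and moreover $\ii\, dw\wedge d\bar w|_{S_1}=(|a|^2-|b|^2)\,\ii\, dz\wedge d\bar z|_{S_1}$. The intersection is transverse and positive, with respect to the graph orientations, precisely when $\o_0^2(\cB_0,\cB_1)>0$ for the positive graph bases, equivalently when $\ii\, dw\wedge d\bar w(\cB_1)>0$. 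Evaluating on the positive graph basis $\cB_1$, for which $\ii\, dz\wedge d\bar z(\cB_1)>0$ by construction, this quantity has exactly the sign of $|a|^2-|b|^2$. Hence the intersection is transverse and positive iff $|a|^2-|b|^2>0$.

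The point to watch, and the reason this differs from Lemma \ref{lem:linear-algebra-1}(3), is the choice of positive basis on $S_1$. There the positive basis is the symplectic one, satisfying $(1+|a|^2-|b|^2)\,\ii\, dz\wedge d\bar z(\cB_1^+)>0$, whereas here it is the graph basis with $\ii\, dz\wedge d\bar z(\cB_1)>0$; the two disagree exactly on the range $|a|^2-|b|^2<-1$, which is why the three-interval answer of Lemma \ref{lem:linear-algebra-1}(3) collapses to the single condition $|a|^2-|b|^2>0$. I expect the only real care needed is this sign bookkeeping; as a cross-check, a direct $4\times 4$ determinant computation in the real coordinates $(x_1,y_1,x_2,y_2)$, evaluating the standard volume form on the ordered pair of graph bases, also yields $|a|^2-|b|^2$.
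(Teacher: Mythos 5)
Your proposal is correct and follows essentially the same route as the paper: both evaluate $\o_0^2(\cB_0,\cB_1)=\ii\, dw\wedge d\bar w(\cB_1)=(|a|^2-|b|^2)\,\ii\, dz\wedge d\bar z(\cB_1)$ on positive graph bases and conclude from $\ii\, dz\wedge d\bar z(\cB_1)>0$. Your added remarks (the comparison with the symplectic orientation of Lemma \ref{lem:linear-algebra-1}(3) and the determinant cross-check) are consistent elaborations, not a different argument.
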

\begin{proof}
By hypothesis we know that $\ii dz \wedge d \bar z(\mathcal{B}_{S_1}^+)>0$. On the other hand, $\o_0^2(\mathcal{B}_{S_0}^+,\mathcal{B}_{S_1}^+)=\ii dw \wedge d \bar w(\mathcal{B}_{S_1}^+)=\ii (|a|^2-|b|^2) d z \wedge d \bar z(\mathcal{B}^+_{S_1})$, and this gives $|a|^2-|b|^2>0$.
\end{proof}

\begin{corollary} \label{cor:linear-algebra0}
Let $S_1=\{w=a_1z+b_1\bar z\}$, $S_2=\{w=a_2 z + b_2 \bar z\}$ planes of $\CC^2$, oriented as graphs. Then $S_1$ and $S_2$ intersect transversely and positively $\iff |a_1-a_2|^2-|b_1-b_2|^2 >0$.
\end{corollary}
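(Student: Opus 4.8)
The plan is to reduce the statement to Lemma~\ref{lem:graphs} by a linear change of coordinates that carries $S_1$ to the standard plane $S_0=\{w=0\}$. Both $S_1$ and $S_2$ are real $2$-planes through the origin, so they meet at $p=0$, and the assertion concerns the intersection there.

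First I would introduce the $\RR$-linear map $\Phi\colon\CC^2\to\CC^2$, $\Phi(z,w)=(z,\,w-a_1z-b_1\bar z)$, and record its basic properties. Writing $z=x_1+\ii x_2$, $w=x_3+\ii x_4$, the real Jacobian of $\Phi$ is block lower-triangular with identity diagonal blocks, hence $\det=1>0$, so $\Phi$ preserves the orientation of $\CC^2=\RR^4$. Moreover $\Phi$ is the identity on the $z$-coordinate, it sends $S_1$ to $S_0=\{w=0\}$ and $S_2$ to $S_2'=\{w=(a_2-a_1)z+(b_2-b_1)\bar z\}$, and since it does not alter $z$ it carries the graph orientation of each $S_i$ to the graph orientation of its image.

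The key observation is that ``intersecting transversely and positively'' for graph-oriented surfaces is invariant under orientation-preserving $\RR$-linear isomorphisms. Indeed, since $\tfrac12\o_0^2=dx_1\wedge dx_2\wedge dx_3\wedge dx_4$ is a positive multiple of the standard volume form, the condition $\o_0^2(\cB_1^+,\cB_2^+)>0$ (with $\cB_i^+$ a graph-positive basis of $S_i$) just says that the concatenation $\cB_1^+\cup\cB_2^+$ is a positively oriented basis of $\RR^4$; in particular this already encodes transversality. An orientation-preserving isomorphism sends a positively oriented basis to a positively oriented basis, and by the previous step it sends graph-positive bases to graph-positive bases, so the whole condition is preserved under $\Phi$.

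Combining these, $S_1,S_2$ intersect transversely and positively iff $S_0,S_2'$ do; applying Lemma~\ref{lem:graphs} to $S_0$ and $S_2'=\{w=az+b\bar z\}$ with $a=a_2-a_1$, $b=b_2-b_1$ yields the equivalence with $|a|^2-|b|^2>0$, i.e.\ $|a_1-a_2|^2-|b_1-b_2|^2>0$. The one point I would be most careful about, and the only genuine subtlety, is that $\Phi$ is \emph{not} a symplectomorphism, so one cannot appeal to symplectic invariance; the resolution is precisely that for graph-oriented surfaces the transverse-and-positive condition depends on the ambient structure only through its orientation, which $\Phi$ does preserve. Alternatively one could bypass $\Phi$ and compute $\o_0^2(\cB_1^+,\cB_2^+)$ directly from the graph parametrizations, arriving at the same inequality after a short determinant computation.
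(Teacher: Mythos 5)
Your proof is correct and follows essentially the same route as the paper: the paper's proof uses exactly the same shear map $\p(z,w)=(z,\,w-a_1z-b_1\bar z)$, notes that it preserves the orientation of $\CC^2$ and the graph orientations (since it fixes $z$), and then invokes Lemma~\ref{lem:graphs}. Your additional justification that transverse-and-positive intersection of graph-oriented planes depends only on the ambient orientation (not the symplectic form) is a careful elaboration of a point the paper leaves implicit, but it is the same argument.
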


\begin{proof}
Consider the map $\p (z,w)=(z,w-a_1z-b_1 \bar z)=(z',w')$, which preserves the orientation of $\CC^2$ and transforms $S_1$ to $\p(S_1)=\{w'=0\}$ and $S_2$ to $\p(S_2)=\{w'=(a_2-a_1)z'+(b_2-b_1) \bar z'\}$. Hence $S_1$ and $S_2$ intersect transversely and positively if and only if $\p(S_1)$ and $\p(S_2)$ do. On the other hand, $\p$ preserves the coordinate $z$, so the induced orientation of $\p(S_1)$ and $\p(S_2)$ are the orientation as graphs on $z'$. Now Lemma \ref{lem:graphs} gives the result. 
\end{proof}

If the surfaces $S_0$ and $S_1$ are given as graphs on different coordinates (say $S_0$ on $z$ and $S_1$ on $w$), we have a simpler result. In fact, in this case, the orientation as a symplectic surface does coincide with the orientation as a graph:

\begin{lemma} \label{lem:linear-algebra-2}
In $(\CC^2,\o_0)$, 
consider the planes $S_0=\{w=0\}$ and $S_1=\{z=\a w+\b \bar w\}$, where $\a,\b\in\CC$. We have
\begin{enumerate}
    \item[(1)] $S_1$ is symplectic $\iff |\a|^2-|\b|^2+1 \ne 0$.
    \item[(2)] $S_0$, $S_1$ intersect transversely for any $\a,\b \in \CC$.
    \item[(3)] $S_1$ is symplectic and $S_0$, $S_1$ intersect transversely and positively 
    $ \iff |\a|^2-|\b|^2+1>0$
    \item[(4)] In any of the hypothesis from (3), it follows that $S_1$ is oriented as a graph.
\end{enumerate}
\end{lemma}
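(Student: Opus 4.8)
The plan is to mirror the computation in Lemma~\ref{lem:linear-algebra-1}, interchanging the roles of $z$ and $w$ and exploiting that $S_1$ is now a graph over $w$. First I would parametrize $S_1$ by $w$, so that $z=\a w+\b\bar w$ gives $dz=\a\,dw+\b\,d\bar w$ and $d\bar z=\bar\a\,d\bar w+\bar\b\,dw$. A direct wedge computation then yields $dz\wedge d\bar z|_{S_1}=(|\a|^2-|\b|^2)\,dw\wedge d\bar w$, whence
\[
\o_0|_{S_1}=\tfrac{\ii}{2}\bigl(|\a|^2-|\b|^2+1\bigr)\,dw\wedge d\bar w .
\]
Since $\ii\,dw\wedge d\bar w$ is a nonzero area form, $S_1$ is symplectic exactly when the coefficient is nonzero, giving statement~(1).

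The structural point that makes (2) differ from its analogue in Lemma~\ref{lem:linear-algebra-1} is that $w$ is a free coordinate on $S_1$, so the restriction $dw\wedge d\bar w|_{S_1}$ is nondegenerate and never vanishes (in contrast to $dw\wedge d\bar w|_{S_1}=(|a|^2-|b|^2)\,dz\wedge d\bar z$ there). To test transversality I would evaluate the volume form $\o_0^2$ on a basis $\cB_0$ of $S_0$ together with a basis $\cB_1$ of $S_1$. Writing $\o_0^2$ as a positive multiple of $(\ii\,dz\wedge d\bar z)\wedge(\ii\,dw\wedge d\bar w)$ and using that $dw=d\bar w=0$ along $S_0$, only the term pairing $dz\wedge d\bar z$ with $\cB_0$ and $dw\wedge d\bar w$ with $\cB_1$ survives, so $\o_0^2(\cB_0,\cB_1)$ is a positive multiple of $(\ii\,dz\wedge d\bar z)(\cB_0)\cdot(\ii\,dw\wedge d\bar w)(\cB_1)$. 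Both factors are nonzero, the second by the observation above, so $S_0,S_1$ meet transversely for all $\a,\b$, which is statement~(2).

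For (3) I would run the same evaluation on positive bases $\cB_0^+,\cB_1^+$. Here $(\ii\,dz\wedge d\bar z)(\cB_0^+)>0$ since $\o_0|_{S_0}=\tfrac{\ii}{2}dz\wedge d\bar z$, while from the formula for $\o_0|_{S_1}$ the factor $(\ii\,dw\wedge d\bar w)(\cB_1^+)$ carries the sign of $|\a|^2-|\b|^2+1$. Hence $\o_0^2(\cB_0^+,\cB_1^+)>0$ precisely when $|\a|^2-|\b|^2+1>0$; and this inequality, being in particular nonvanishing, also forces $S_1$ symplectic, with transversality free from (2). This is statement~(3). For (4), the orientation of $S_1$ as a graph over $w$ is the one making $\ii\,dw\wedge d\bar w$ positive, whereas the symplectic orientation is the one making $\o_0|_{S_1}=\tfrac{\ii}{2}(|\a|^2-|\b|^2+1)\,dw\wedge d\bar w$ positive; under the hypothesis of (3) the coefficient is positive, so the two orientations agree.

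I do not anticipate a genuine obstacle, as each step is a short linear-algebra computation. The only place needing care is the bookkeeping of orientation conventions — keeping the symplectic orientation of $S_1$ separate from its graph orientation — together with the initial recognition that presenting $S_1$ as a graph over $w$ (rather than over $z$, as in Lemma~\ref{lem:linear-algebra-1}) is exactly what makes the transversality factor $dw\wedge d\bar w|_{S_1}$ nondegenerate, and hence why (2) holds with no condition on $\a,\b$.
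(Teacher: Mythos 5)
Your proposal is correct and follows essentially the same route as the paper: restrict $\o_0$ to $S_1$ viewed as a graph over $w$ to get statement (1), evaluate $\o_0^2$ on a basis of $S_0$ together with a basis of $S_1$ — reducing to the factor $\ii\,dw\wedge d\bar w(\cB_1)$, which never vanishes since $w$ is a coordinate on $S_1$ — to get (2), and then compare signs on positive bases for (3) and (4). The only difference is cosmetic: you spell out the wedge decomposition of $\o_0^2$ and track the positive constants explicitly, which the paper leaves implicit.
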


\begin{proof}
In this case, (1) follows from
$$
 \o_0|_{S_1}=(1+|\a|^2-|\b|^2) \ii\, dw \wedge d\bar w\, ,
 $$

To prove (2),  for $\cB_0$ the standard basis of $S_0$ and 
$\cB_1$ any basis of $S_1$, we have
$\o_0^2(\cB_0,\cB_1)= \ii\, dw \wedge d \bar w(\cB_1) =
\ii\, dw \wedge d \bar w|_{S_1}$, which is always non-zero since $w$ is the coordinate for $S_1$.

To see (3), note that $S_0$ and $S_1$ intersect transversely and positively if and only if
$0 < \o_0^2(\cB^+_{0}, \cB^+_{1}) =\ii \, dw \wedge d \bar w(\cB^+_1)$,
with $\cB_1^+$ a positive basis for $S_1$, which means that
\[
0<\o_0|_{S_1}(\cB_1^+)= (1+|\a|^2-|\b|^2) \ii \,dw \wedge d \bar w (\cB_1^+),
\]
and this gives $1+|\a|^2-|\b|^2>0$.

Finally, (4) is clear noting that $S_1$ is oriented as a graph if and only if $0<\ii \, dw \wedge d \bar w(\cB^+_1)$.
\end{proof}
\begin{remark}
Note that when the surface $S_1$ can be represented both as $w=az+b\bar{z}$, and $z=\a w+\b \bar{w}$, then we have
 $$
 \a= \frac{\bar{a}}{|a|^2-|b|^2}, \qquad 
 \b= \frac{-{b}}{|a|^2-|b|^2},
 $$
so that
$$
 |\a|^2-|\b|^2+1 = \frac{1+|a|^2-|b|^2}{|a|^2-|b|^2}\, .
 $$
Therefore the statements of Lemmas \ref{lem:linear-algebra-1} and \ref{lem:linear-algebra-2} are equivalent.
\end{remark}

When three planes intersect at the origin, we get a stronger condition.

\begin{proposition} \label{prop:linearalgebra1}
Let $S_0=\{z=0\}$, $S_1=\{w=a_1z+b_1\bar z\}$, $S_2=\{w=a_2 z + b_2 \bar z\}$ be planes of $(\CC^2,\o_0)$.
Then all of them are symplectic and intersect transversely and positively (pairwise) if and only if 
\begin{itemize}
    \item $|a_1|^2-|b_1|^2+1>0$, $|a_2|^2-|b_2|^2+1>0$,
    \item $|a_1-a_2|^2-|b_1-b_2|^2>0$.
\end{itemize}
\end{proposition}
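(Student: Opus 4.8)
The plan is to reduce each of the three pairwise conditions to the two-plane results already established, handling the pairs $(S_0,S_1)$, $(S_0,S_2)$ and $(S_1,S_2)$ in turn. The one genuinely delicate point is the bookkeeping of orientations, and I will arrange matters so that the orientation ambiguity of the graphs $S_1,S_2$ is resolved by the very conditions produced by the pairs involving $S_0$.

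First I would dispose of the pairs $(S_0,S_i)$, $i=1,2$. Since here $S_0=\{z=0\}$ rather than $\{w=0\}$, I apply the coordinate swap $\s(z,w)=(w,z)$, which is $\CC$-linear and hence an orientation-preserving symplectomorphism of $(\CC^2,\o_0)$; in particular it preserves transversality, positivity of intersections, and the property of being symplectic. It carries $S_0$ to $\{w=0\}$ and each $S_i=\{w=a_iz+b_i\bar z\}$ to $\{z=a_iw+b_i\bar w\}$, which is exactly the configuration of Lemma \ref{lem:linear-algebra-2} with $(\a,\b)=(a_i,b_i)$. Part (2) of that lemma gives transversality automatically, while part (3) shows that ``$S_i$ symplectic and $(S_0,S_i)$ transverse and positive'' is equivalent to $|a_i|^2-|b_i|^2+1>0$; this produces the first bullet. (Note $S_0$ is automatically symplectic, since $\o_0$ restricts to a nonzero $2$-form on it.)

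Next I would treat the pair $(S_1,S_2)$ of graphs over $z$. Corollary \ref{cor:linear-algebra0} already asserts that, with the orientation as graphs, $S_1$ and $S_2$ meet transversely and positively iff $|a_1-a_2|^2-|b_1-b_2|^2>0$, which is the second bullet. The point to settle is that the proposition demands positivity for the symplectic orientation, not the graph orientation, and this is where the main obstacle lies. Its resolution is the crux: from the formula $\o_0|_{S_i}=(1+|a_i|^2-|b_i|^2)\,\ii\,dz\wedge d\bar z$ in the proof of Lemma \ref{lem:linear-algebra-1}, the symplectic orientation of $S_i$ coincides with its graph orientation exactly when $1+|a_i|^2-|b_i|^2>0$, i.e.\ precisely under the first bullet. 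Hence, once the conditions coming from the pairs $(S_0,S_i)$ are in force, the two notions of positivity for $(S_1,S_2)$ agree and Corollary \ref{cor:linear-algebra0} applies verbatim.

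Finally I would assemble both implications, watching the logical order. For the ``if'' direction, the first bullet at once makes $S_1,S_2$ symplectic and forces their symplectic and graph orientations to coincide, after which the second bullet and Corollary \ref{cor:linear-algebra0} yield the positive transverse intersection of $S_1$ and $S_2$, while the pairs with $S_0$ follow from the translated Lemma \ref{lem:linear-algebra-2}. For the ``only if'' direction, the good intersections of $(S_0,S_1)$ and $(S_0,S_2)$ give the first bullet via Lemma \ref{lem:linear-algebra-2}(3); with the orientations thereby pinned down, the good intersection of $(S_1,S_2)$ transfers to the graph orientation, and Corollary \ref{cor:linear-algebra0} extracts the second bullet.
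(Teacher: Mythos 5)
Your proposal is correct and follows essentially the same route as the paper: reduce the pairs $(S_0,S_j)$ to Lemma \ref{lem:linear-algebra-2} (the paper leaves the $z\leftrightarrow w$ swap implicit), use those conditions to identify the symplectic and graph orientations of $S_1,S_2$ (the paper cites Lemma \ref{lem:linear-algebra-2}(4) where you rederive it from $\o_0|_{S_i}$), and then conclude the pair $(S_1,S_2)$ via Corollary \ref{cor:linear-algebra0}. Your write-up is merely more explicit about the orientation bookkeeping and the two logical directions.
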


\begin{proof}
By Lemma \ref{lem:linear-algebra-2} applied to $S_0$ and $S_j$, $j=1,2$, we have the conditions $|a_j|^2-|b_j|^2+1>0$
for $S_j$ to be symplectic and for $S_0,S_j$ to intersect transversely and positively. This implies also that $S_1, S_2$ are oriented as graphs. Hence by Corollary \ref{cor:linear-algebra0} we deduce that $S_1, S_2$ intersect transversely and positively if and only if $|a_1-a_2|^2-|b_1-b_2|^2>0$.
\end{proof}

In the following theorem we see that for any finite family of symplectic surfaces intersecting transversely
and positively at a point $p$, up to a $C^0$-small deformation, we arrange  that the surfaces are complex in a small chart around $p$, that is, they intersect complex-like according to Definition \ref{def:complex-like}.

\begin{theorem} [Complex model for a multiple intersection] \label{thm:complex-model-many-curves}
Let $(X,\o)$ be a symplectic $4$-manifold, and suppose that $S_0, S_1, \dots, S_l \subset X$
are symplectic surfaces intersecting transversely and positively all of them (pairwise) at the same point $p \in X$.
Then we can deform the surfaces $S_j$ to $\tilde{S}_{j}$ for $j \ge 1$, such that:
\begin{enumerate}
\item The perturbed surfaces $\tilde{S}_{j}$ are symplectic for $1 \le j \le l$,

\item the deformation of the surfaces $S_j$ is $C^0$-small and only occurs in a small chart around $p$,
and the surfaces $S_0$ and $\tilde{S}_j$, $1\le j\le l$, only intersect at $p$ (in the given chart).

\item Near the point $p$ there are Darboux coordinates $(z,w)$ 
for which $S_0=\{z=0\}$ and $\tilde{S}_{j}=\{w=a_j z\}$, for $1 \le j \le l$, where $a_1,\ldots, a_l\in \CC^*$ are distinct complex numbers.
\end{enumerate}
\end{theorem}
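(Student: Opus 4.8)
The plan is to reduce everything to an explicit computation in a single Darboux chart and then realize the straightening of the surfaces by a carefully chosen radial interpolation. First I would invoke Darboux's theorem together with the fact that a symplectic surface is locally standard (Weinstein's neighbourhood theorem applied to the symplectic $2$-plane $T_pS_0$) to produce coordinates $(z,w)\in\CC^2$ centred at $p$ in which $\o=\o_0$ and $S_0=\{z=0\}$ exactly. Since each $S_j$, $j\ge 1$, is positively transverse to $S_0$ at $p$, its tangent plane meets $\{z=0\}$ only at the origin, so on a small disc $\{|z|\le\delta\}$ the surface $S_j$ is a graph $w=g_j(z,\bar z)$ with $g_j(z,\bar z)=a_jz+b_j\bar z+h_j(z,\bar z)$, where $h_j=O(|z|^2)$ and $T_pS_j=\{w=a_jz+b_j\bar z\}$. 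A direct computation gives $\o_0|_{S_j}=\tfrac{\ii}{2}\bigl(1+|(g_j)_z|^2-|(g_j)_{\bar z}|^2\bigr)dz\wedge d\bar z$, so being symplectic is the pointwise condition $1+|(g_j)_z|^2-|(g_j)_{\bar z}|^2>0$, while by Corollary \ref{cor:linear-algebra0} two graphs $w=g_i$, $w=g_j$ are positively transverse (and meet only where $g_i=g_j$) exactly when $|(g_i)_z-(g_j)_z|^2-|(g_i)_{\bar z}-(g_j)_{\bar z}|^2>0$. By Proposition \ref{prop:linearalgebra1} the hypotheses guarantee these inequalities for the linear parts, $1+|a_j|^2-|b_j|^2>0$ and $|a_i-a_j|^2-|b_i-b_j|^2>0$; in particular the $a_j$ are forced to be pairwise distinct.

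The construction itself deforms the graph functions radially along a path of planes. Choose $s\mapsto(a_j(s),b_j(s))$, $s\in[0,1]$, with $(a_j(1),b_j(1))=(a_j,b_j)$ the original data and $(a_j(0),b_j(0))=(c_j,0)$ a target complex line, keeping both families of inequalities of Proposition \ref{prop:linearalgebra1} valid along the whole path. Such a path exists: first rescale $b_j(s)=s\,b_j$ with $a_j$ fixed, which only increases $1+|a_j|^2-s^2|b_j|^2$ and $|a_i-a_j|^2-s^2|b_i-b_j|^2$ and hence preserves all inequalities, landing on the lines $\{w=a_jz\}$; if some $a_j$ vanishes, I then move the (now distinct) slopes through distinct values to arbitrary distinct nonzero targets $c_j\in\CC^*$, possible because with $b=0$ the only surviving constraint is $a_i\ne a_j$ and the configuration space of distinct points of $\CC$ is connected. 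I then fix a cutoff $s=s(|z|)$ equal to $0$ for $|z|\le\delta_0$ and to $1$ for $|z|\ge\delta_1$, an ordinary cutoff $\tau(|z|)$ equal to $0$ for $|z|\le\delta_1$ and $1$ for $|z|\ge\delta$, and set
\[
\tilde g_j(z,\bar z)=a_j(s(|z|))\,z+b_j(s(|z|))\,\bar z+\tau(|z|)\,h_j(z,\bar z).
\]
By construction $\tilde g_j=c_jz$ for $|z|\le\delta_0$, giving $\tilde S_j=\{w=c_jz\}$, while $\tilde g_j=g_j$ for $|z|\ge\delta$, so the deformation is supported in $\{|z|\le\delta\}$; since $|\tilde g_j-g_j|\le(|c_j-a_j|+|b_j|)\delta+O(\delta^2)$ it is $C^0$-small once $\delta$ is small. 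Relabelling the $c_j$ as $a_j$ yields the stated normal form, and because each $\tilde S_j$ is a graph over $z$ it meets $S_0=\{z=0\}$ only at $p$.

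The point requiring care — and the main obstacle — is that the radial derivatives of $s(|z|)$ feed into the symplecticity and positivity tests and threaten to destroy them. Differentiating, $(\tilde g_j)_{\bar z}=b_j(s)+s'(|z|)\tfrac{z}{2|z|}\bigl(a_j'(s)z+b_j'(s)\bar z\bigr)+(\tau h_j)_{\bar z}$, so the dangerous extra term has size $\lesssim|z|\,|s'(|z|)|\cdot\sup_s(|a_j'|+|b_j'|)$. Crucially, the antiholomorphic linear coefficient $b_j\bar z$ cannot be shrunk by zooming in, since rescaling fixes the tangent plane; hence a cutoff spread over an ordinary annulus $[\delta_0,\delta_1]$ with $\delta_0\sim\delta_1$ contributes a term of order $|b_j|$ and is useless. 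The remedy is to spread $s(|z|)$ over logarithmically many scales: writing $s(|z|)=\chi(\log|z|)$ with $\chi$ increasing from $0$ to $1$ over an interval of length $T$, one gets $|z|\,|s'(|z|)|=|\chi'(\log|z|)|\lesssim 1/T$, so all derivative-of-cutoff terms are $O(1/T)$ uniformly, while the contribution $\tau h_j$ is $C^1$-small on $\{|z|\le\delta\}$ because $h_j=O(|z|^2)$. Choosing $T$ large, the quantities $(\tilde g_j)_z$ and $(\tilde g_j)_{\bar z}$ become as close as desired to $a_j(s)$ and $b_j(s)$, so the symplecticity test reduces to $1+|a_j(s)|^2-|b_j(s)|^2>0$ and the pairwise test to $|a_i(s)-a_j(s)|^2-|b_i(s)-b_j(s)|^2>0$, both of which hold along the path. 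This simultaneously keeps every $\tilde S_j$ symplectic, positively transverse to $S_0$ and to the others, and meeting each only at $p$, completing the proof.
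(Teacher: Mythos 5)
Your proposal is correct, and its analytic core coincides with the paper's: an adapted Darboux chart in which the $S_j$ are graphs $w=a_jz+b_j\bar z+h_j$, the linear-algebra criteria of Proposition \ref{prop:linearalgebra1} and Corollary \ref{cor:linear-algebra0}, an ordinary annulus cutoff to remove the $O(|z|^2)$ remainder (the paper's Step one), and --- the key point you correctly isolate --- a cutoff spread over logarithmically many scales to remove the antiholomorphic linear term $b_j\bar z$, which is scale-invariant and cannot be killed by an ordinary cutoff (this is exactly the paper's Step two, where the profile $\rho(\l\log|z|+\mu)$ plays the role of your $\chi(\log|z|)$ with $\l\sim 1/T$). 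Where you genuinely depart from the paper is the packaging: instead of two successive perturbations you run a single radial interpolation along a path $(a_j(s),b_j(s))$ in the space of linear configurations, first scaling $b_j$ to $0$ (which only improves the inequalities; this leg is precisely the paper's Step two) and then moving the slopes through the connected configuration space of distinct points of $\CC$ to arbitrary distinct targets $c_j\in\CC^*$, with compactness of $[0,1]$ supplying the uniform positivity constants needed in the estimates. This buys two things. First, the requirement $a_j\in\CC^*$ in item (3) is handled inside the proof: the paper's two steps alone produce $\tilde S_j=\{w=a_jz\}$ with the original holomorphic coefficients, one of which may vanish, and the paper needs the separate Remark \ref{rem:change-a_j} to repair this. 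Second, your argument proves something stronger than the statement: since all the motion takes place inside a ball of radius $\delta$ and every surface passes through $p$, the deformation is $C^0$-small no matter how far the slopes travel, so with $S_0$ fixed the slopes of the $\tilde S_j$ can be prescribed to be \emph{any} distinct values in $\CC^*$ --- a useful sharpening in view of the remark following Corollary \ref{cor:complex-model-many-curves}, which only asserts that the coefficients can be changed slightly and without control on how far.
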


\begin{proof}
We can take Darboux coordinates $(z,w)$ adapted to $S_0$ so that $p=(0,0)$, $S_0=\{z=0\}$, $S_j=\{w=a_jz+b_j \bar z + R_j(z)\}$ for some $a_j, b_j \in \CC$, and 
 $$
  |R_j(z)|\le C|z|^2, \quad |\partial_z R_j|
  \le C |z|, \quad |\partial_{\bar z} R_j| \le C |z|,
  $$
for some constant $C>0$. These coordinates are defined in some ball $B^4(r_0)$. 
By Proposition \ref{prop:linearalgebra1}  
applied to the tangent planes at $p$, we have:
\begin{itemize}
    \item $|a_j|^2-|b_j|^2+1>0$, for all $j$,
    \item $|a_j-a_k|^2-|b_j-b_k|^2 >0$, for all $j\neq k$.
\end{itemize}
Denote $M=\max_j \{|a_j|, |b_j| \}$, and 
\[
\e_0=\min_{j \ne k} \{1, |a_j|^2-|b_j|^2 + 1,
 |a_j-a_k|-|b_j-b_k|\}>0 \, .
\]
Let
$\rho: \RR \to [0,1]$ be an increasing function vanishing for $t \le 1$, $|\rho'| \le 2$, and $\rho \equiv 1$ on $\{t \ge 2\}$.

\medskip

\noindent \textbf{Step one: linearize the $S_j$}.
Denote 
\[
S'_j=\{w=a_jz+b_j \bar z+\rho(\tfrac{|z|}{\l}) R_j(z)\},
\]
with $\l>0$ to be chosen later. An initial requirement for 
$\l$ is that the set 
$$
\{(z,w) \mid  |z| \le 2\l, w=a_jz+b_j \bar z + \rho(\tfrac{|z|}{\l}) R_j(z)\} \subset B^4(r_0).
$$
In this set $|z|\le 2\l, |w| \le 2M\l + 4C \l^2$, so the condition is $4\l^2+ (2M\l + 4 C \l^2)^2 < r_0$.

Clearly $S'_j \cap S_0=\{(0,0)\}$. 
Let us see that $S'_j \cap S'_k=\{(0,0)\}$. For $|z| \le \l$ and $|z| \ge 2 \l$ this is clear (in the first case, they are linear subspaces; in the second case they are the original surfaces $S_j,S_k$ that do not intersect by hypothesis).
At any point of intersection with $\l \le |z| \le 2\l$ we would have
\begin{align*}
0 & =|(a_j-a_k)z+(b_j-b_k) \bar z + \rho(\tfrac{|z|}{\l}) (R_j(z)-R_k(z))| \\
& \ge (|a_j-a_k|-|b_j-b_k|)|z| - \rho(\tfrac{|z|}{\l}) |R_j(z)-R_k(z)| \\
& \ge \e_0 |z| - 2C |z|^2\ge \e_0 \l -8C \l^2=\l(\e_0-8C\l),
\end{align*}
so it suffices to take $\l<\frac{\e_0}{8C}$ to see that there are no such points.

Let us see that $S'_j$ is symplectic. Again this is clear for $|z| \le \l$ and for $|z| \ge 2\l$. Take a point of $S'_j$ with $\l \le |z| \le 2 \l$, and compute
\begin{align*}
\partial_z w & =a_j+\tfrac{\bar z}{2 \l |z|} \rho' R_j+\rho \, \partial_z R_j=a_j + E_1(z), \\
\partial_{\bar z} w & =b_j+\tfrac{z}{2 \l |z|} \rho' R_j+\rho \, \partial_{\bar z} R_j=b_j+E_2(z),
\end{align*}
with $|E_i(z)| \le 
\frac{2C|z|^2}{2\lambda}+C|z| \le 6C\l$. 
Given this, we have
\begin{align*}
& 1+|\partial_z w|^2-|\partial_{\bar z} w|^2
 = 1+ |a_j+E_1|^2- |b_j+E_2|^2  \\
 & \ge   1+|a_j|^2-|E_1|^2 - 2 |a_j||E_1|- |b_j|^2-|E_2|^2-2|b_j||E_2|\\
  & \ge   \e_0 - 72 C^2\lambda^2- 24 MC\lambda 
  \ge \e_0 - C(9+24M)\lambda,
\end{align*}
using that $M=\max_j\{|a_j|,|b_j|\}$, and that $8C\lambda<
\e_0\le 1$, as proved earlier. 
So it suffices to take $\l<\frac{\e_0}{C(9+24M)}$, to
obtain that $1+|\partial_z w|^2-|\partial_{\bar z} w|^2
 >0$, and hence $S_j'$ is symplectic.

\medskip

\noindent \textbf{Step two: remove the anti-holomorphic part}.
By the previous step, we have Darboux coordinates (in a smaller chart) so that $S_0=\{z=0\}$, $S'_j=\{w=a_jz+b_j \bar z \}$. Note that the $a_j$ are all distinct by the condition $|a_j-a_k|^2-|b_j-b_k|^2>0$.
Define 
\[
\tilde S_j=\{w=a_j z + b_j \bar z \, \rho(\l \log |z|+ \mu)\},
\]
where $\l>0$ is small to be chosen later, and $\mu=\mu(\l) \ge 2$ is chosen after $\l$ so that the set where 
$\l \log |z|+ \mu \le 2$ is contained in the Darboux chart, i.e. the set 
\[
\{(z,w) \mid |z| \le  \exp\left(
{\frac{2-\mu}{\l}}\right), |w| \le 2M
\exp\left({\frac{2-\mu}{\l}}\right) \}
\] 
is inside the chart, with $M=\max_j\{ |a_j|,|b_j|\}$.

Let us see first that $\tilde S_j \cap \tilde S_k=\{(0,0)\}$. We already know this  for $z \le \exp(\frac{1-\mu}{\l})$ since $\tilde S_j=\{w=a_jz\}$, $\tilde S_k=\{w=a_k z\}$ in such locus. 
The same happens for $z \ge \exp(\frac{2-\mu}{\l})$ since $\tilde S_j=S'_j$, $\tilde S_k=S'_k$ do not intersect there. 
Consider now $\exp(\frac{1-\mu}{\l}) \le |z| \le \exp(\frac{2-\mu}{\l})$, and suppose that
$\tilde S_j$, $\tilde S_k$ intersect, so
\[
0= |(a_j -a_k)z + (b_j -b_k)\rho\, \bar z| 
\ge (|a_j -a_k|-|b_j -b_k|) |z| \ge \e_0  \exp(\tfrac{1-\mu}{\l})>0,
\]
giving a contradiction. 

Finally we see that $\tilde S_j$ is symplectic. Again, we only need to see it for $|z|$ in the annulus
$\exp(\frac{1-\mu}{\l}) \le |z| \le \exp(\frac{2-\mu}{\l})$.
We have that $\partial_z w=a_j+b_j \bar z  \rho' \frac{\l }{|z|} \frac{\bar z}{2 |z|}$, and 
$\partial_{\bar z} w=b_j \rho + b_j \bar z  \rho' \frac{\l }{|z|} \frac{ z}{2 |z|}$, so we have
\begin{align*}
 1&+|\partial_z w|^2 - |\partial_{\bar z} w|^2 \geq \\
& \ge 1+|a_j|^2- |b_j|^2 (\rho')^2 \tfrac{\l^2}{4}- |a_j||b_j|\rho' \l -|b_j|^2 \rho^2-
  |b_j|^2 (\rho')^2 \tfrac{\l^2}{4}- |b_j|^2 \rho \rho'\l
  \\
& \ge 1+|a_j|^2-|b_j|^2 - 2 M^2\lambda^2 -4M^2\lambda
 \\
& \ge \e_0 - 2 M^2\lambda^2 -4M^2\lambda\, .
\end{align*}
Taking $\l<1$, we need to take $\l < \frac{\e_0}{6 M^2}$. With both conditions satisfied, we have that 
after this perturbation,
we end up with a smaller Darboux chart so that $S_0=\{z=0\}$, $\tilde S_j=\{w=a_j z\}$, as desired.
\end{proof}

\begin{remark} \label{rem:change-a_j}
With a slight modification of the proof, we can moreover see that the coefficients $a_j$ of the surfaces $S_j$ can be slightly changed, so as to avoid $0$ for instance.
Indeed, after the first step we have $S_j=\{w=a_jz+b_j\bar z\}$. Pick for instance $j=2$. We deform $S_2$ to 
\[
\hat S_2=\{w=\big(a_2 + \e \big(1-\rho(\tfrac{|z|}{\a_0})\big)\big)z + b_2 \bar z\} \, ,
\]
where $0<\e<1$ is small to be chosen later, and $\a_0$ is chosen so that the set where $|z| \le 2 \a_0$ and $w=
\big(a_2 + \e \big(1-\rho(\frac{|z|}{\a_0})\big)\big)z 
+ b_2 \bar z$ is contained in $B^4(r_0)$. 
This is easily done as $|w| \le (2M+1)|z|$.

Note that $\hat S_2=S_2$ for $|z| \ge 2 \a_0$, and $\hat S_2=\{w=(a_2 + \e )z + b_2 \bar z\}$ for $|z| \le \a_0$. 
The surface $\hat S_2$ intersects the other 
$S_j$ only at $(0,0)$ if we take $\e>0$ small, as
\begin{align*}
|(a_2 + &\e(1-\rho)-a_j)z+(b_2-b_j) \bar z| \ge \\
& \ge (|a_2-a_j|-|b_2-b_j|-\e)|z| \ge (\e_0-\e)|z|\, ,
\end{align*}
so it suffices to take $\e \le \frac12 \e_0$.
On the other hand, as $\rho(\frac{|z|}{\a_0})$ and its derivative are bounded for $\alpha_0$ fixed, 
the surface $\hat S_2$ is arbitrarily $C^1$-close to $S_2$ as $\e \to 0^+$. 
We choose $\e>0$ small so that $\hat S_2$ is symplectic, $a_2+\e \ne a_j$ for all $j$, and $\hat S_2$ still intersects the other surfaces transversely and positively at the point $p$. After this modification, the proof of Theorem \ref{thm:complex-model-many-curves} goes on with step two.
\end{remark}

In Theorem \ref{thm:complex-model-many-curves}, the surface $S_0$ plays a special role since it has vertical slope, 
but we can state a more symmetrical role for all surfaces as in the following.

\begin{corollary} \label{cor:complex-model-many-curves}
Let $(X,\o)$ be a symplectic $4$-manifold, and suppose
that $S_0, S_1, \dots, S_l \subset X$ 
are symplectic surfaces as in Theorem 
\ref{thm:complex-model-many-curves}.
We can deform the surfaces $S_j$ so that near the point $p$ there are Darboux coordinates $(z,w)$ 
in which $\tilde S_0=\{w=\a_0 z\}$ and $\tilde S_{j}=\{w=\a_j z\}$ for $1 \le j \le l$, for some $\a_0,\a_j \in \CC$, distinct complex numbers.
\end{corollary}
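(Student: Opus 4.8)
The plan is to start from the normal form already furnished by Theorem~\ref{thm:complex-model-many-curves} and then apply a single linear symplectomorphism of $\CC^2$ that tilts the distinguished surface $S_0$ off the vertical while preserving the complex-line shape of all the surfaces. Concretely, I would first invoke Theorem~\ref{thm:complex-model-many-curves} to obtain Darboux coordinates $(z,w)$ near $p$ in which $S_0=\{z=0\}$ and $\tilde S_j=\{w=a_jz\}$ for $1\le j\le l$, with $a_1,\dots,a_l\in\CC^*$ pairwise distinct. Each of these is a complex line through the origin, hence a point of $\CP^1$: the line $\{z=0\}$ is the point at infinity, while $\{w=a_jz\}$ is the point $a_j$. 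The only obstruction to writing $S_0$ as a graph $\{w=\a_0z\}$ is that it currently sits at infinity.

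The key step is to rotate the coordinates by the map
\[
U_\theta(z,w)=(z\cos\theta-w\sin\theta,\ z\sin\theta+w\cos\theta),
\]
for a suitably chosen $\theta$. This $U_\theta$ is complex linear and preserves $|z|^2+|w|^2$, so it lies in $U(2)$ and in particular preserves $\o_0$; hence it carries the Darboux chart to a new Darboux chart and automatically preserves orientation together with the transverse-positive intersection property. Being complex linear it sends complex lines to complex lines, acting on $\CP^1$ by the M\"obius transformation $a\mapsto\frac{\sin\theta+a\cos\theta}{\cos\theta-a\sin\theta}$. A direct computation then gives $U_\theta(S_0)=\{w=\a_0z\}$ with $\a_0=-\cot\theta$, and $U_\theta(\tilde S_j)=\{w=\a_jz\}$ with $\a_j=\frac{\sin\theta+a_j\cos\theta}{\cos\theta-a_j\sin\theta}$, valid as long as the denominators do not vanish, i.e.\ $\sin\theta\neq0$ and $a_j\neq\cot\theta$ for all $j$.

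To finish I would choose $\theta\in(0,\pi)$, which guarantees $\sin\theta\neq0$; since $\cot$ is injective on $(0,\pi)$, there are at most finitely many values of $\theta$ to avoid, namely those with $\cot\theta\in\{a_1,\dots,a_l\}$, so a generic such $\theta$ makes all of $\a_0,\a_1,\dots,\a_l$ well defined. Because the displayed M\"obius transformation has determinant $\cos^2\theta+\sin^2\theta=1$, it is a bijection of $\CP^1$, so the distinct source points $\infty,a_1,\dots,a_l$ have pairwise distinct images $\a_0,\a_1,\dots,\a_l$, as required, and the resulting surfaces are again complex lines, hence symplectic and complex-like in the sense of Definition~\ref{def:complex-like}. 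I do not expect a genuine obstacle here: every symplectic-geometric property (symplecticity, transversality, positivity) is automatic once one observes that a unitary rotation sends complex lines to complex lines, and the only point requiring attention is the elementary genericity argument ensuring that the single rotation $U_\theta$ simultaneously turns $S_0$ and every $\tilde S_j$ into finite-slope graphs with distinct slopes.
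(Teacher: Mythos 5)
Your proof is correct, and it follows the same basic strategy as the paper---reduce to the normal form of Theorem \ref{thm:complex-model-many-curves} and then apply a unitary (hence symplectic) linear change of coordinates---but the execution differs in a way worth noting. The paper applies the theorem with the roles of $z$ and $w$ reversed, obtaining $S_0=\{w=0\}$ and $S_j=\{z=a_jw\}$; it must then invoke Remark \ref{rem:change-a_j} to perturb away any $a_j=0$ (otherwise that $S_j$ cannot be rewritten as a finite-slope graph $w=\tfrac{1}{a_j}z$), and finally applies a unitary matrix carrying $\{w=0\}$ to $\{w=\a_0 z\}$ for a prescribed $\a_0$. You instead apply the theorem in its stated form and let a \emph{generic} rotation $U_\theta\in U(2)$ do all the work: since the induced M\"obius map is a bijection of $\CP^1$, distinctness of the resulting slopes is automatic, and the finitely many bad angles (those with $\cot\theta\in\{a_1,\dots,a_l\}$, relevant only for the real $a_j$) are avoided generically. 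This eliminates the need for the extra perturbation of Remark \ref{rem:change-a_j}, so no surface is deformed beyond what Theorem \ref{thm:complex-model-many-curves} already requires. The one thing the paper's route buys that yours does not is control over $\a_0$: its unitary map realizes any prescribed $\a_0\in\CC$ (this is what the remark following the corollary alludes to), whereas your real rotations only produce $\a_0=-\cot\theta\in\RR$; for the corollary as stated this is immaterial, and your family could in any case be enlarged to all of $U(2)$ to recover the same flexibility.
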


\begin{proof}
We apply Theorem \ref{thm:complex-model-many-curves} with the coordinates $z,w$ reversed, and we get Darboux coordinates so that the perturbed surfaces are $S_0=\{w=0\}$ and $S_j=\{z=a_j w\}$. 
We can moreover assume that $a_j \ne 0$ for all $j$, because if one were zero we can apply the technique from remark \ref{rem:change-a_j} above to 
slightly change it. Then $w=\frac{1}{a_j} z$. To finish, we can apply a unitary linear map that transforms $\{w=0\}$ into $\{w=\a_0 z\}$, 
e.g.\ $A=\frac{1}{1+|\a_0|^2} 
\begin{pmatrix}
1 & -\bar \a_0 \\
\a_0 & 1
\end{pmatrix}
$ does the job. The rest of the surfaces are transformed by this map to other surfaces of type $w=\a_j z$.
\end{proof}

\begin{remark}
Note that in the Corollary \ref{cor:complex-model-many-curves} we cannot choose all the coefficients $\a_0,\a_1, \dots, \a_l$ arbitrarily. 
We can choose only one of them, say $\a_0$. The rest $\a_1, \dots , \a_l$ are not completely rigid, 
as they can be slightly changed, however we cannot control how far they can be changed.
\end{remark}

\section{Arranging that all intersections are nice} 

Recall the following definition from \cite{MRT}.

\begin{definition} \label{def:nice}
We say that a family $S_i$ 
of symplectic surfaces in $X$ intersect nicely if they intersect transversely and positively, no three of them intersect at the same point, and in addition around each point of intersection of $S_i \cap S_j$ there exists a Darboux chart $(z,w)\in \CC^2$ so that $S_i=\{w=0\}$, $S_j=\{z=0\}$.
\end{definition}

The model in Definition \ref{def:nice} means that $S_i,S_j$ intersect orthogonally. 
When we have two surfaces, 
after a $C^0$-small deformation, we can obtain that all positive and transverse intersections of two symplectic surfaces are nice, as proved in the following result.

\begin{lemma}[{\cite[Lemma 6]{MRT}}] \label{lem:orthogonal}
Let $S_0, S_1 \subset X$ be symplectic surfaces intersecting transversely and positively at a point. Then there is $C^0$-small perturbation of the surfaces (actually of $S_1$)
in a small neighborhood of the intersection point so that the perturbed surfaces become $S_0=\{z=0\}$, 
$S_1'=\{w=0\}$.
\end{lemma}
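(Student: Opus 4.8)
The plan is to reduce to the situation already manufactured by Theorem \ref{thm:complex-model-many-curves} and then perform one further, genuinely nonlinear, deformation. First I apply Theorem \ref{thm:complex-model-many-curves} with $l=1$: since $S_0$ and $S_1$ intersect transversely and positively at $p$, the theorem produces a $C^0$-small deformation of $S_1$ alone (supported in a small chart around $p$) together with Darboux coordinates $(z,w)$ in which $S_0=\{z=0\}$ and the perturbed surface is $\tilde S_1=\{w=a_1 z\}$ for some $a_1\in\CC^*$, the two surfaces meeting only at $p$. Thus the problem is reduced to \emph{flattening the slope}, i.e.\ deforming $\{w=a_1 z\}$ to $\{w=0\}$ near $p$ while keeping $S_0=\{z=0\}$ untouched.

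Before constructing this deformation I note that it cannot be linear. The elements of $U(2)$ preserving the line $\{z=0\}$ setwise are exactly the diagonal unitaries, and these send $\{w=a_1 z\}$ to $\{w=(\d/\a)\,a_1 z\}$, again a line of nonzero slope; so no unitary fixing $S_0$ can make $\tilde S_1$ orthogonal to it, and a cutoff interpolation is forced. Following Step two of the proof of Theorem \ref{thm:complex-model-many-curves}, I set
\[
S_1'=\{w=a_1 z\,\rho(\l\log|z|+\mu)\},
\]
where $\rho$ is the standard bump ($\rho\equiv 0$ for $t\le 1$, $\rho\equiv 1$ for $t\ge 2$, $|\rho'|\le 2$), $\l>0$ is small, and $\mu=\mu(\l)\ge 2$ is chosen so that the annulus $\exp(\tfrac{1-\mu}{\l})\le |z|\le \exp(\tfrac{2-\mu}{\l})$ lies inside the chart where $\tilde S_1=\{w=a_1z\}$. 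For $|z|\le \exp(\tfrac{1-\mu}{\l})$ we have $\rho=0$, so $S_1'=\{w=0\}$, while for $|z|\ge \exp(\tfrac{2-\mu}{\l})$ we have $\rho=1$, so $S_1'=\tilde S_1$ coincides with the earlier surface. Hence the new deformation is $C^0$-small, supported near $p$, and modifies only $S_1$.

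It then remains to check the two required properties in the interpolation annulus. For the intersection with $S_0=\{z=0\}$: on $S_1'$ the condition $z=0$ forces $w=0$, so $S_1'\cap S_0=\{(0,0)\}$, and near $p$ the two surfaces are the coordinate planes $\{w=0\}$ and $\{z=0\}$, which intersect transversely and positively by Lemma \ref{lem:linear-algebra-2} with $\a=\b=0$. For the symplectic condition I compute, exactly as in Step two,
\[
\partial_z w=a_1\big(\rho+\tfrac{\l}{2}\rho'\big),\qquad \partial_{\bar z}w=a_1\,\rho'\,\tfrac{\l z^2}{2|z|^2},
\]
whence $|\partial_{\bar z}w|\le |a_1|\l$. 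Therefore $1+|\partial_z w|^2-|\partial_{\bar z}w|^2\ge 1-|a_1|^2\l^2>0$ as soon as $\l<1/|a_1|$, so $S_1'$ is symplectic throughout, and we are left with $S_0=\{z=0\}$, $S_1'=\{w=0\}$ near $p$, as desired.

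The main obstacle is precisely this symplectic estimate. A naive cutoff $\rho(|z|/\l)$ would give $|\partial_{\bar z}w|\sim|a_1|\,|\rho'|\,\tfrac{|z|}{2\l}$, which is of order $|a_1|$ on the annulus $\l\le|z|\le 2\l$ and need not be small; this can destroy the symplectic condition when $|a_1|$ is large. Using the logarithmic cutoff $\rho(\l\log|z|+\mu)$ is exactly what forces $\partial_{\bar z}w=O(\l)$ uniformly, and this is the whole point of borrowing that device from the theorem.
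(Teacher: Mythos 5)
Your proof is correct, but note that the paper does not actually prove this lemma: it is imported verbatim from \cite[Lemma 6]{MRT} with no argument given, so there is no internal proof to compare against. What you supply is a self-contained derivation from the paper's own machinery, and it works: Theorem \ref{thm:complex-model-many-curves} with $l=1$ legitimately reduces the problem to $S_0=\{z=0\}$, $\tilde S_1=\{w=a_1z\}$ in Darboux coordinates (no circularity, since that theorem's proof does not use this lemma), and your logarithmic cutoff, applied to the full holomorphic slope $a_1z$ rather than to an anti-holomorphic remainder as in Step two of the theorem, flattens $\tilde S_1$ to $\{w=0\}$ near $p$. The key computations check out: $\partial_z w=a_1(\rho+\tfrac{\l}{2}\rho')$ and $\partial_{\bar z}w=a_1\rho'\tfrac{\l z}{2\bar z}$, so $1+|\partial_z w|^2-|\partial_{\bar z}w|^2\ge 1-|a_1|^2\l^2>0$ for small $\l$; the perturbed surface is a graph over $z$, hence meets $S_0$ only at the origin; and the modification is supported in the chart and $C^0$-small.

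Two side remarks in your write-up are inaccurate, though neither affects the validity of the proof. First, ruling out a linear fix by looking only at $U(2)$ is insufficient: Darboux coordinate changes allow arbitrary linear symplectomorphisms in $Sp(4,\RR)$, not just complex-linear ones. The correct obstruction is that $\{z=0\}$ and $\{w=0\}$ are $\o_0$-orthogonal planes while $\{z=0\}$ and $\{w=a_1z\}$ are not when $a_1\ne 0$, and symplectic orthogonality is preserved by every symplectomorphism. Second, your claim that the naive radial cutoff $\rho(|z|/\l)$ could destroy the symplectic condition for large $|a_1|$ is false in this particular situation: with $w=a_1z\,\rho(|z|/\l)$ one computes $|\partial_z w|^2-|\partial_{\bar z}w|^2=|a_1|^2\rho\,\bigl(\rho+\rho'\,|z|/\l\bigr)\ge 0$, so the form restricted to the graph is bounded below by $1$ regardless of $|a_1|$; the large anti-holomorphic derivative is exactly compensated inside $|\partial_z w|^2$ because you are cutting off a purely holomorphic linear function. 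The logarithmic cutoff is genuinely needed only in the setting of Step two of Theorem \ref{thm:complex-model-many-curves}, where the anti-holomorphic part $b_j\bar z$ is cut off while the holomorphic part $a_jz$ is retained and cross terms of size $|a_j||b_j||\rho'|$ appear. So your proof stands, but the simpler cutoff would have worked too.
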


In order to get nice intersections for a family of symplectic surfaces, we need to get rid of multiple intersections. We do this in the following proposition. 

\begin{proposition}[From transverse and positive to 
nice intersections] 
\label{prop:transverse-positive-implies-nice}
If $S_j$ is a finite family of symplectic surfaces of $X$ intersecting transversely and positively (pairwise), then after a $C^0$-small perturbation of the surfaces near the intersection points, the modified surfaces $\tilde S_j \subset X$ are symplectic, and they intersect nicely.
\end{proposition}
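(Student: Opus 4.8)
The plan is to fix the two defects that separate a transverse, positive configuration from a \emph{nice} one in the sense of Definition \ref{def:nice}: the possible presence of points where three or more surfaces meet, and the failure of the pairwise intersections to be orthogonal. Since the $S_j$ intersect transversely, all intersection points are isolated, and a finite family meets at only finitely many points; hence I will work inside small balls, disjoint from one another, around these points, perturbing only there. This reduces the statement to two local tasks, carried out one point at a time: first break up the multiple intersections into pairwise ones, then make each pairwise intersection orthogonal.

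First I would remove the multiple intersections. Let $p$ be a point where $S_{i_0}, \dots, S_{i_r}$ meet with $r \ge 2$, and choose a small Darboux ball $B$ around $p$ containing no other intersection point. Applying Corollary \ref{cor:complex-model-many-curves} inside $B$, I may assume these surfaces are complex lines $\{w = \a_i z\}$ through the origin, with distinct slopes $\a_i$. To push them apart I keep one of them fixed and deform each of the others to $\{w = \a_i z + c_i\,\chi(|z|/\l)\}$, where $\chi$ is a cutoff equal to $1$ on $|z| \le \l$ and $0$ on $|z| \ge 2\l$, and $c_i \in \CC^*$ are small constants. In the inner region the surfaces are affine complex lines $\{w = \a_i z + c_i\}$, automatically symplectic, meeting pairwise transversely and positively at the points $z = (c_k - c_i)/(\a_i - \a_k)$; choosing the $c_i$ generic with $|c_i| \lesssim \d\l$ for $\d$ small guarantees these points are all distinct, lie in the inner region, and that no three surfaces pass through a common point. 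In the transition annulus $\l \le |z| \le 2\l$ the corrections to $\partial_z w$ and $\partial_{\bar z} w$ are of size $O(|c_i|/\l) = O(\d)$, so for $\d$ small the holomorphic leading term dominates and $1 + |\partial_z w|^2 - |\partial_{\bar z} w|^2 > 0$ persists, keeping the surfaces symplectic and positively transverse, exactly along the lines of the estimates in the proof of Theorem \ref{thm:complex-model-many-curves}. Iterating over the finitely many multiple points, I reach a configuration in which every intersection point lies on exactly two surfaces, still meeting transversely and positively.

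Second, with only double points left, I make each orthogonal. At each intersection point of a pair $S_i \cap S_j$, Lemma \ref{lem:orthogonal} provides a $C^0$-small perturbation, supported in an arbitrarily small neighborhood of that point and affecting only one of the two surfaces, after which they become $\{z = 0\}$ and $\{w = 0\}$ in suitable Darboux coordinates. Since the double points are isolated and finite in number, I take pairwise disjoint neighborhoods around them so the perturbations do not interfere; applying Lemma \ref{lem:orthogonal} at each point in turn produces the surfaces $\tilde S_j$ meeting nicely.

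The step I expect to be the main obstacle is the first one: the separation must simultaneously keep all deformed surfaces symplectic across the transition annulus, preserve transversality and positivity of every pairwise intersection there, and be performed with enough genericity in the constants $c_i$ that the newly created intersection points are pairwise distinct, so that no three surfaces meet at a common point. The symplectic and positivity estimates in the annulus are routine refinements of those already carried out in Theorem \ref{thm:complex-model-many-curves}; the real content lies in the bookkeeping that localizes each perturbation to a ball containing no other intersection point, and in the genericity of the translation constants $c_i$.
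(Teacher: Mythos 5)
Your proposal is correct and follows essentially the same strategy as the paper's proof: reduce to the complex model of Theorem \ref{thm:complex-model-many-curves} (resp.\ Corollary \ref{cor:complex-model-many-curves}) at each multiple point, separate the lines by cutoff translations $w=\a_i z + c_i\,\chi(|z|/\l)$ with the same non-intersection and symplecticity estimates in the transition annulus, and then apply Lemma \ref{lem:orthogonal} at the resulting double points. The only (minor) difference is that you translate all surfaces at once and invoke genericity of the constants $c_i$ to rule out coincident or triple intersection points, whereas the paper translates one surface at a time in successively smaller Darboux balls, which makes the distinctness of the new intersection points $z_{il}=\e/(\a_i-\a_l)$ automatic at the cost of an iteration; both variants work.
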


\begin{proof}
We need to see that we can perturb the surfaces $S_j$ so that we avoid multiple intersections. After this is done we are left with a family of symplectic surfaces $S_j$ so that at every intersection point there are two surfaces intersecting, so Lemma \ref{lem:orthogonal} applies to obtain orthogonal (hence nice) intersections.

Lat $p$ be a point of intersection, and let us denote $S_0, S_1, \dots, S_l$ the surfaces that pass through $p$, where $l\ge 2$. By Theorem \ref{thm:complex-model-many-curves}, after a suitable perturbation of $S_1, \dots , S_l$ we get coordinates $(z,w)$ around $p=(0,0)$ so that the $S_j$ have local equations of the form
\[
S_i=\{w=\alpha_i z\} , \quad i=0,\dots , l,
\]
where $\alpha_0=0$, $\alpha_i \in \CC^*$ are all distinct.
This shows that the surfaces $S_j$ are the graphs of the functions $s_j(z)=\alpha_j z$. Let $r_0>0$ be such that the Darboux chart is the ball $B^4(r_0)$, and $\lambda_0>0$ so that the graphs of the surfaces for $|z|\le 2\lambda_0$ are
inside $B^4(r_0)$.

Now we shall modify $S_l$ so that it no longer passes through $(0,0)$ and it intersects the other surfaces transversely and positively at distinct points. Repeating this for $S_{l-1}, \ldots, S_2$ inductively, we end up with all surfaces intersecting only pairwise and transversely and positively.

For small $\e>0$ to be chosen later, define $\tilde s_l(z)= \alpha_l z+ \e \,\rho({\frac{|z|}{\lambda_0}})$, with $\rho(t)$ a bump function which equals $1$ for $t \le 1$, 
equals $0$ for $t \ge 2$, and it has 
$|\rho'(t)| \le 2$. This perturbation agrees with $s_l$ outside of the chart.
We have that 
\[
\tilde s_l(z)=s_i(z) \iff \alpha_l z+ \e \rho(
{|z|}/{\lambda_0})=\alpha_i z\, .
\]
If $|z| \ge \lambda_0$ this equality cannot happen if we choose $\e>0$ small, because
\[
|\alpha_l z+ \e \rho({{|z|}/{\lambda_0}})-\alpha_i z| 
\ge |\alpha_l-\alpha_i||z|-\e \rho(
{|z|}/{\lambda_0}) 
\ge |\alpha_l-\alpha_i| \lambda_0- \e>0 ,
\]
for a choice of $\e < 
\min_{i\ne l} \{|\alpha_l-\alpha_i|\}/\lambda_0$.
Hence the equality can only occur for $|z| \le \lambda_0$, where $\rho=1$.
The intersection point $S_i\cap \tilde S_l$ is 
 $$
 z_{il}=\frac{\e}{\alpha_i-\alpha_l} \,.
 $$ 
All of these points are distinct. 
Moreover, the intersection points are transverse and positive since the graphs are complex linear subspaces
$w=\alpha_i z$ and $w=\alpha_lz+\e$.

After this modification, we have that all intersections of $\tilde S_l$ with the rest of surfaces $S_i$ are transverse and positive, and occur at distinct points $p_{il}=(z_{il},\a_i z_{il})$. However, at $p=(0,0)$ there is a multiple intersection of $S_0,\ldots, S_{l-1}$. We can take a new 
(smaller) Darboux coordinate ball $B^4(r_1)$ around $p$
that satisfies that it does not intersect the previous surface $\tilde S_l$. We
repeat the process inside this ball with $S_{l-1}$, so
that we get a perturbed surface $\tilde S_{l-1}$ intersecting $S_0,\ldots, S_{l-2}$ transversely and positively, at distinct points. The intersection $\tilde S_{l-1}\cap \tilde S_l$ is the same as $S_{l-1}\cap \tilde S_l$ that is already transverse and positive.

We iterate the process until we end up with $S_0$ and $S_1$ through $p$ alone, and we are done.
\end{proof}

\begin{remark}
  Proposition \ref{prop:transverse-positive-implies-nice} can be proved alternatively by applying standard transversality arguments. 
  After a small \emph{generic} $C^1$-perturbation of 
  $S_0,\ldots, S_l$ we can obtain surfaces $\tilde S_0,\ldots, \tilde S_l$ so that 
  all intersections are transverse and at distinct points. 
  As the perturbation is $C^1$-small, the perturbed surfaces $\tilde S_i$ are still symplectic. The positivity of the 
  intersections is preserved under $C^1$-small perturbations; still one has to keep some control so that no new intersections appear.
  
Our argument is however more explicit. And moreover, the 
structural result in Corollary \ref{cor:complex-model-many-curves} can be useful 
in other situations, for instance when one wants to separate many symplectic surfaces by a single symplectic
blow-up. 
\end{remark}

Once that we have nice intersections, we can apply the 
process of symplectic resolution of nice intersections. This
appears in \cite[section 5.1]{MRT}. As this reference does not
include a full proof, we provide it here for future reference.

\begin{lemma}[Resolution of nice intersections]
\label{lem:resolution}
Let $(X,\o)$ be a symplectic $4$-manifold, and 
let $S_1, S_2\subset X$ be two symplectic surfaces intersecting nicely at a point $p$. Then there is
a symplectic surface $S\subset X$ diffeomorphic to
the 
connected sum $S_1\# S_2$. The gluing of $S_1$ and $S_2$ is performed
in a small neighbourhood around $p$, so $[S]=[S_1]+[S_2]$ in $H_2(X,\mathbb{Z})$.
\end{lemma}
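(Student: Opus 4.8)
The plan is to replace a neighbourhood of the node $p$ by the standard holomorphic smoothing of an ordinary double point, namely the hyperbola $\{zw=\e\}$, and to interpolate back to $S_1$ and $S_2$ outside a small ball. Since a complex curve in the Darboux chart $(\CC^2,\o_0)$ is automatically symplectic (with the graph orientation agreeing with the symplectic one, by the computation in Lemma \ref{lem:linear-algebra-1}), the only genuine work will be in the two interpolation regions, where I must check that the smoothing stays symplectic and that the pieces fit together into a single embedded surface.

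First I would use the nice-intersection hypothesis (Definition \ref{def:nice}) to fix a Darboux chart $(z,w)$ on a ball $B^4(r_0)$ with $S_1=\{w=0\}$ and $S_2=\{z=0\}$, and for small $\e>0$ consider $H_\e=\{zw=\e\}$, a smooth properly embedded complex curve. As a graph over $z$ it is $w=\e/z$, with $\partial_{\bar z}(\e/z)=0$, so the symplectic quantity is $1+|\partial_z w|^2-|\partial_{\bar z}w|^2=1+\e^2/|z|^4>0$ everywhere, confirming $H_\e$ is symplectic. Geometrically $H_\e$ runs along $S_1$ where $|z|$ is large (there $|w|=\e/|z|$ is tiny), runs along $S_2$ where $|w|$ is large, and has a waist at $|z|=|w|=\sqrt\e$; this is exactly the model of a neck joining the two branches.

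Next I would perform the gluing. Choose $\l>0$ small enough that the relevant pieces of $H_\e$ lie inside the chart, take $\e\ll\l^2$, and let $\rho$ be the cut-off with $\rho\equiv1$ for $t\le1$ and $\rho\equiv0$ for $t\ge2$, $|\rho'|\le2$. Define $S$ by setting $w=0$ for $|z|\ge2\l$, the interpolated graph $w=\rho(\tfrac{|z|}{\l})\,\e/z$ for $\l\le|z|\le2\l$, the exact hyperbola $w=\e/z$ in the waist region $\e/\l\le|z|\le\l$, and symmetrically $z=\rho(\tfrac{|w|}{\l})\,\e/w$ for $\l\le|w|\le2\l$ together with $z=0$ for $|w|\ge2\l$. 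In each interpolation annulus the graph function and its first derivatives are $O(\e/\l^2)$, so $1+|\partial_z w|^2-|\partial_{\bar z}w|^2$ is as close to $1$ as desired once $\e/\l^2$ is small, making $S$ symplectic throughout; the condition $\e<\l^2$ is also what guarantees that the graph-over-$z$ and graph-over-$w$ descriptions overlap and patch along the waist into one embedded surface, introducing no new intersections.

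Finally I would read off topology and homology. Off $B^4$ we have $S=S_1\cup S_2$, while inside we have replaced the two transverse disks meeting at $p$ by a single embedded annulus joining them; thus $S$ is obtained from $S_1$ and $S_2$ by deleting a small disk around $p$ in each and inserting a cylinder, i.e.\ $S\cong S_1\#S_2$, with all changes supported near $p$. Since $S$ agrees with $S_1\cup S_2$ away from the ball and the smoothing of a positive transverse intersection is homologous to the union, $[S]=[S_1]+[S_2]$ in $H_2(X,\ZZ)$. The hard part will be the gluing bookkeeping: verifying that $\e\ll\l^2$ simultaneously makes both interpolation annuli symplectic and forces the two graph pieces to meet smoothly at the waist as one embedded surface, and confirming that the holomorphic orientation of $H_\e$ is the one compatible with the positivity of the original intersection, which holds precisely because $H_\e$ is complex.
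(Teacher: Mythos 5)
Your proposal is correct and takes essentially the same route as the paper: both replace the node by the holomorphic hyperbola $\{zw=\e\}$ inside a small ball, keep $S_1\cup S_2$ outside, and interpolate with a cutoff function supported in an annulus, deducing smoothness and symplecticity from the smallness of the perturbation. The only difference is bookkeeping: the paper does the interpolation implicitly, taking $S$ to be the zero set of $zw-\e\,\rho\bigl(\sqrt{|z|^2+|w|^2}/\lambda_0\bigr)$ viewed as a $C^1$-small perturbation of $zw$, whereas you patch explicit graphs over $z$ and over $w$ and verify the graph criterion $1+|\partial_z w|^2-|\partial_{\bar z}w|^2>0$ directly.
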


\begin{proof}
Consider a Darboux chart $(z,w)$ such that $S_1=\{w=0\}$
and $S_2=\{z=0\}$. Take a smooth bump function
$\rho(t)$ such that $\rho(t)\equiv 1$ for $t\in [0,1]$,
$\rho(t)\equiv 0$ for 
$t\in [2,\infty)$ and $|\rho'|\leq 2$.
Take the surface 
$$
 S=\left\{(z,w) \, | \, 
 zw=\e \, \rho
 \left(\frac{\sqrt{|z^2|+|w^2|}}{\lambda_0}\right)\right\}
 $$
 where the constant $\lambda_0>0$ is taken so that the ball $B^4(2 \lambda_0)$ lies inside the Darboux chart, and $\varepsilon>0$ is small to be chosen later.
For $(z,w) \notin B^4(2\lambda_0)$, i.e. for $|z|^2+|w|^2>4 \lambda_0^2$, we have $S=\{zw=0\}=S_1 \cup S_2$. For $(z,w)\in B^4(\lambda_0)$, $S=\{zw=\e \}$
which topologically is an annulus that has substituted
the two discs $zw=0$ intersecting at the origin. 

To see that $S$ is smooth, we only need to check it for $(z,w)\in B^4(2\lambda_0)-B^4(\lambda_0)$.
Write $f_\varepsilon(z,w)=zw-\e \,\rho(r/\lambda_0)$,
with $r=\sqrt{|z|^2+|w|^2}$, and let $f(z,w)=zw$. Then it is easy to see that the $C^1$-norm $\lvert f_\varepsilon-f \rvert \le C \e$ for some constant $C$. Hence for $\e>0$ small, the differential of $f_\varepsilon$ has rank 2 in $B^4(2\lambda_0)-B^4(\lambda_0)$, because the differential of $f$ does.


Finally, $S$ is symplectic. This is clear outside
$B^4(2\lambda_0)$, and in $B^4(\lambda_0)$. In the annulus
$B^4(2\lambda_0)-B^4(\lambda_0)$, $f(z,w)=zw$ defines
a symplectic surface, and as $f_\varepsilon(z,w)$ is a small $C^1$-deformation, then
$S=\{f_\varepsilon(z,w)=0\}$ is a symplectic surface.
\end{proof}

\section{K\"ahler model for tubular neighborhoods} \label{section:kahler-model}

Now we are going to revisit the construction of symplectic plumbings from \cite{CMRV},  
and we will see that the resulting manifold can also be given the structure of a complex K\"ahler manifold.

We start with a compact Riemann surface, that is a surface 
$(S,\o_S,J_S)$ with a K\"ahler structure. This means that $S$ is symplectic and 
it is also endowed with a compatible almost complex 
structure $J_S$. This produces a Riemannian metric $g_S(u,v)=\omega_S(u,J_S(v))$ and a hermitian metric 
$h_S=g_S+ \ii\, \omega_S$.

Now let $\pi:L \to S$ be a holomorphic line bundle over $S$. 
This means that we have a covering $\{B_\a\}$ of $S$ such that
$L|_{B_\a} =\pi^{-1}(B_\alpha) \cong B_\a \times \CC$, and the changes of trivializations 
$g_{\a\b}:B_\a\cap B_\b \to \CC^*$ are holomorphic functions. There is a well-defined 
complex structure $J_L$ on the total space $L$. We also put a hermitian metric $h$ on $L$ and denote $B_r(S) \subset L$ 
the disc bundle of radius $r$ with respect to $h$.

\begin{lemma}[K\"ahler model for a neighbourhood of a surface] \label{lem:kahler-line-bundle}
For $r>0$ small enough, the disc bundle $B_r(S)\subset L$ admits a symplectic form $\o_L$ 
which is compatible with the complex structure $J_L$, defining thus a K\"ahler structure $(B_r(S),\o_L, J_L)$ so that the inclusion $S \to B_r(S)$ is K\"ahler.

Consider a finite collection of points $p_i \in S$. 
Suppose that $(\o_S,J_S)$ is the flat K\"ahler structure on small neighbourhoods $V_i\subset S$ of $p_i$
for all $i$.
Then we can choose a hermitian metric $h$ so that 
the trivializations $L|_{V_i}=
\pi^{-1}(V_i) \cong V_i \x \CC$ give
diffeomorphisms $B_r(S) \cap \pi^{-1}(V_i)  \cong V_i\x B_r(0)$ such that 
$(\o_L,J_L)$ is the product flat K\"ahler structure on it.
\end{lemma}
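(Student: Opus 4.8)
The plan is to produce $\o_L$ explicitly, as the sum of the (fiber-degenerate) pullback of $\o_S$ and a fiberwise-positive term built from the hermitian norm. Write $\psi\colon L\to\RR_{\geq 0}$ for the squared-norm function $\psi(v)=|v|_h^2$ on the total space, and set
\[
\o_L=\pi^*\o_S+\tfrac{\ii}{2}\,\partial\bar\partial\psi .
\]
Because $\o_S$ is closed and $\tfrac{\ii}{2}\partial\bar\partial\psi$ is a real $(1,1)$-form which is automatically $d$-closed, $\o_L$ is a closed real $(1,1)$-form, hence $J_L$-invariant; once it is shown to be positive it is automatically compatible with $J_L$, so the whole point is positivity near the zero section.

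To check positivity I would compute in a holomorphic trivialization $L|_{B_\alpha}\cong B_\alpha\times\CC$, with base coordinate $x$ and fiber coordinate $\zeta$, writing $\psi=u(x)\,|\zeta|^2$ for a positive function $u>0$. A direct computation gives
\[
\tfrac{\ii}{2}\,\partial\bar\partial\psi=\tfrac{\ii}{2}\big(u_{x\bar x}\,|\zeta|^2\,dx\wedge d\bar x+u_{\bar x}\,\bar\zeta\,d\zeta\wedge d\bar x+u_x\,\zeta\,dx\wedge d\bar\zeta+u\,d\zeta\wedge d\bar\zeta\big).
\]
On the zero section $\{\zeta=0\}$ every term carrying a factor $\zeta$ or $\bar\zeta$ drops, leaving $\tfrac{\ii}{2}u\,d\zeta\wedge d\bar\zeta$; thus along $S$ the form $\o_L$ is block-diagonal with base block $\o_S>0$ and fiber block $\tfrac{\ii}{2}u\,d\zeta\wedge d\bar\zeta>0$, hence positive definite. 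Since positivity is an open condition and $S$ is compact, there is a uniform $r>0$ with $\o_L>0$ on $B_r(S)$, and this is the desired Kähler form. Its restriction to $\{\zeta=0\}$ is $\o_S$ and $J_L|_S=J_S$, so $S\hookrightarrow B_r(S)$ is Kähler, giving the first claim.

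For the second statement I would arrange $h$ to be flat near each $p_i$. Starting from any smooth hermitian metric $h_0$, with local representative $a_i^0(x)$ in the fixed trivialization over $V_i$, replace it inside $V_i$ by the convex interpolation $a_i=\beta+(1-\beta)a_i^0$, where $\beta$ is a bump function equal to $1$ near $p_i$ and supported in $V_i$, and keep $h_0$ outside $\bigcup_i V_i$. Taking the $V_i$ disjoint, each modification is compactly supported in its $V_i$, so this glues to a global smooth hermitian metric $h$ with $u\equiv 1$ near every $p_i$. Where $u\equiv1$ one has $\psi=|\zeta|^2$ and $\tfrac{\ii}{2}\partial\bar\partial\psi=\tfrac{\ii}{2}\,d\zeta\wedge d\bar\zeta$; combined with the hypothesis $\o_S=\tfrac{\ii}{2}\,dx\wedge d\bar x$ on $V_i$, and the fact that the holomorphic trivialization makes $J_L$ the product complex structure, this yields $\o_L=\tfrac{\ii}{2}(dx\wedge d\bar x+d\zeta\wedge d\bar\zeta)$, the product flat Kähler structure. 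Since $h$ is standard there, $B_r(S)\cap\pi^{-1}(V_i)=\{|\zeta|<r\}\cong V_i\times B_r(0)$, exactly the asserted model.

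The step needing the most care is positivity of $\o_L$ off the zero section and the extraction of a uniform radius: the curvature term $u_{x\bar x}|\zeta|^2\,dx\wedge d\bar x$ and the two mixed terms may have either sign, so one genuinely uses the smallness $|\zeta|<r$ together with compactness of $S$ to dominate them by $\pi^*\o_S$. The remaining subtlety is purely bookkeeping: keeping the interpolated metric positive (immediate for a convex combination of positive functions) and gluing it across $\partial V_i$ without altering $h$ elsewhere, so that both conclusions hold for one and the same $h$.
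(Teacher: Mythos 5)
Your proof is correct and is essentially the paper's own argument: your form $\pi^*\omega_S+\tfrac{\ii}{2}\partial\bar\partial\psi$ coincides with the paper's $\pi^*\omega_S-\tfrac14\, d J_L d(\mathbf{r}^2)$ (your local expansion in a holomorphic trivialization is term-for-term the one in the paper), positivity is obtained in both cases by evaluating on the zero section and using openness plus compactness of $S$, and the flat product model comes from choosing $h\equiv 1$ in the given trivialization. The only cosmetic discrepancy is that your bump function makes $h\equiv 1$ only near $p_i$ rather than on all of $V_i$; take $\beta\equiv 1$ on $V_i$ with support in a slightly larger holomorphic chart (or relabel the smaller neighbourhood as $V_i$) to match the statement exactly.
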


\begin{proof}
Let us call $\mathbf{r}: L \to [0,\infty)$ the radial function 
on the fibers defined as $\mathbf{r}(v)
=|v|=\sqrt{h(v,v)}$. We 
define the $2$-form on $L$ by the formula
\begin{align} \label{eq:omega_L}
 \o_L= \pi^* \o_S - \tfrac{1}{4} d J_L d (\mathbf{r}^2) \, .
\end{align}
A priori, this is well-defined and smooth 
for $\mathbf{r}\neq 0$. 
 In a holomorphic trivialization $L|_{B_\a} \cong
 B_\a \times \CC$ with coordinates $(z,w)$, the metric is $h=h(z) \, dw \cdot d \bar w$, 
 so $\mathbf{r}^2
 = h(z) |w|^2$. Using that $J_L(dz)=\ii\,dz$,
 $J_L(dw)=\ii\,dw$, 
 $J_L(d\bar z)=-\ii\,d\bar z$ and
 $J_L(d\bar w)=-\ii\,d\bar w$, we see that $\o_L$ has the local form
\begin{align*}
\o_L^{\a} =& \, \pi^* \o_S - \tfrac{1}{4} d J_L d (h(z) |w|^2) \\
=& \, \pi^* \o_S + \tfrac{\ii}{2} |w|^2 (\bd^2_{z \bar z} h) dz \wedge d \bar z 
+ \tfrac{\ii}{2} h(z) dw \wedge d \bar w  \\ 
& - \tfrac{\ii}{2} \bar w (\partial_{\bar z} h) d \bar z \wedge d w 
+ \tfrac{\ii}{2} w (\partial_z h) dz \wedge d \bar w \, .
\end{align*}
This shows that $\o_L$ is smooth, and it is 
a $(1,1)$-form, so $\o_L(J_L \cdot, J_L \cdot)=\o_L$. Also, when $w=0$ we have 
\[
\o_L^{\a}|_{(z,0)}
= \pi^* \o_S+\tfrac{\ii}{2} h(z) \, dw \wedge d \bar w \, ,
\]
hence $\o_L(\cdot, J_L \cdot)>0$ on $\{w=0\}$, so there is some neighborhood $B_r(S)$ of the zero section where $\o_L$ is $J_L$-tamed. The pair $(\o_L, J_L)$ defines a K\"ahler structure on $B_r(S)$, and the inclusion $(S,\o_S, J_S) \to (B_r(S), \o_L, J_L)$ respects the K\"ahler structure.

For the second part, assume that $(\o_S, J_S)$ is flat on some small neighborhood $V_i$ of the points $p_i$. 
Therefore $\o_S=\ii\, dz\wedge d\bar z$ on $V_i$. We can choose the Hermitian metric so that on the holomorphic trivialization $L|_{V_i}\cong V_i\x \CC$ we have $h(z)\equiv 1$, that is $h=dw\cdot d\bar w$. 
Then 
 $$
  \o_L=\pi^* \omega_S + \tfrac{\ii}{2}\, dw\wedge d\bar w
=\tfrac{\ii}{2}\, dz\wedge d\bar z+\tfrac{\ii}{2}\, dw\wedge d\bar w
$$
is the product symplectic form. Note that the chart  
$L|_{V_i}\cong V_i\x \CC$ is holomorphic, hence the complex
structure is also a product. This implies that also the
associated Riemannian metric is flat, so we have the flat K\"ahler product structure.
\end{proof}

\begin{remark}\label{rem:rem3}
Given a K\"ahler structure $(\o_S,J_S)$ on the Riemann surface $S$, we can always modify $\o_S$ to another $\o'_S$ 
so that $(\o'_S,J_S)$ is a flat K\"ahler structure near a finite set of points $p_i$. 
Indeed, take holomorphic coordinates $V_i$ around the points $p_i$, and substitute the symplectic form $\o_S$ by $\o'_S=\rho \o_S + (1-\rho) \o_0$ with $\rho$ a bump function vanishing on $V'_i \Subset V_i$ and $\rho \equiv 1$ 
outside $V_i$. Note that $\o'_S$ is closed, being a $2$-form on a surface, and it is $J_S$-positive in $V_i$ 
because both $\o_S$ and $\o_0$ are. In this way 
$(\o'_S,J_S)$ is a K\"ahler structure which becomes flat on
$V'_i$. We may assume that $(S,\o_S)$ and $(S, \o'_S)$ are symplectomorphic via a rescaling of $\o'_S$ so that the symplectic areas match.
 
Alternatively, we can modify $J_S$ to another $J_S'$ so that 
$(\o_S,J_S')$ is a flat  K\"ahler structure near $p_i$.  
For this, we take a Darboux chart around $p_i$, and consider the initial riemannian metric $g_S$ and the flat metric $g_0$, and substitute $g_S$ by $g_S'=\rho g_S+(1-\rho)g_0$. We note that $g_S'$ gives a conformal structure and hence an almost complex structure $J_S'$. For surfaces, all almost complex structures are integrable, so this gives a flat K\"ahler structure on a neighbourhood of $p_i$.
\end{remark}

Next we need to recall the following result from \cite[Proposition 13]{CMRV}, which we adapt to the current situation. It is the relative version of the classical symplectic tubular neighborhood.

\begin{proposition}[Symplectic tubular neighborhood] \label{prop:tubular1}
Let $(X,\o)$ and $(X',\o')$ be two symplectic $4$-manifolds with compact symplectic surfaces $S \subset X$ and $S' \subset X'$ which are symplectomorphic via 
$f \colon S \to S'$,
and suppose that $S^2=S'{}^2$. 
Then there are tubular neigbourhoods $\cV,\cV'$ 
of $S$ and $S'$, respectively, and a symplectomorphism
$g\colon \cV\to \cV'$ with $g|_S=f$. 

Moreover, if there are balls $W\subset \cV$, $W'\subset \cV'$ and a symplectomorphism $h \colon W\to W'$ such that 
$h|_{W\cap S}=f|_{W\cap S}\colon 
W\cap S \to W'\cap S'$, then we
can choose the symplectomorphism $g\colon \cV\to \cV'$ such that
$g|_{\hat{W}}=h$ on a slightly smaller ball 
$\hat{W}\Subset W$. \hfill $\Box$
\end{proposition}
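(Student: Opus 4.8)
The plan is to recognize the Proposition as a relative version of the Weinstein symplectic neighbourhood theorem and to prove it by a Moser argument carried out relative to $S$, with the ``moreover'' clause treated as a further relative statement in which the Moser isotopy is forced to be the identity on $\hat W$. First I would dispose of the basic assertion (existence of $g$ with $g|_S=f$), and then upgrade the construction to respect $h$ on $\hat W$.

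For the first assertion, I would proceed as follows. The $\o$-orthogonal complement $N_S=(TS)^{\perp_\o}$ is a symplectic rank-$2$ bundle over $S$, i.e.\ a complex line bundle, whose isomorphism class is determined by its Euler number $e(N_S)=S^2$; likewise $N_{S'}$ has Euler number $S'^2$. Since $f$ is an orientation-preserving symplectomorphism and $S^2=S'^2$, there is an isomorphism of symplectic bundles $\Phi\colon N_S\to f^*N_{S'}$ covering $f$. Using tubular-neighbourhood identifications of $\cV,\cV'$ with disc bundles in $N_S,N_{S'}$, together with $f$ and $\Phi$, I would build a diffeomorphism $G_0\colon \cV\to\cV'$ with $G_0|_S=f$ whose differential along $S$ carries the splitting $T_pX=T_pS\oplus (N_S)_p$ to the corresponding splitting for $S'$, acting by $df$ on $T_pS$ and by $\Phi$ on $(N_S)_p$. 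Because these splittings are $\o$- and $\o'$-orthogonal and both maps are symplectic, the two closed forms $\o_0:=\o$ and $\o_1:=G_0^*\o'$ then agree, as $2$-forms on $T_pX$, at every point $p\in S$. A relative Poincar\'e lemma produces a $1$-form $\s$ with $d\s=\o_1-\o_0$ and $\s$ vanishing along $S$; the Moser vector field $X_t$ defined by $\iota_{X_t}\o_t=-\s$, with $\o_t=(1-t)\o_0+t\o_1$, then vanishes along $S$, so its flow $\q_t$ (defined on a smaller neighbourhood for $t\in[0,1]$) fixes $S$ pointwise and satisfies $\q_1^*\o_1=\o_0$. Setting $g=G_0\circ\q_1$ gives the desired symplectomorphism with $g|_S=f$.

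For the ``moreover'' clause I would build the boundary condition into this scheme rather than correct for it afterwards. Since $h\colon W\to W'$ is itself a symplectomorphism with $h|_{W\cap S}=f$, I would choose the tubular identifications and the bundle isomorphism $\Phi$ so that the resulting $G_0$ \emph{coincides with} $h$ on the smaller ball $\hat W$. Then $\o_1=G_0^*\o'=h^*\o'=\o=\o_0$ on $\hat W$, so the Moser interpolation is constant there. If in addition the primitive $\s$ can be chosen to vanish on $\hat W$ as well as along $S$, then $X_t\equiv 0$ and $\q_t=\id$ on $\hat W$, whence $g=G_0\circ\q_1=h$ on $\hat W$, while $g$ remains a symplectomorphism extending $f$ globally on $\cV$.

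I expect the genuinely delicate step to be this last one: producing a single primitive $\s$ of $\o_1-\o_0$ that simultaneously vanishes along $S$ (to keep $S$ fixed) and on the ball $\hat W$ (to keep $g=h$ there). The usual homotopy operator, built from the fibrewise radial retraction of $\cV$ onto $S$, annihilates $\s$ only on sets that are saturated by normal fibres, whereas $\hat W$ is a ball; so one must choose the retraction and the ball $\hat W\Subset W$ compatibly, arranging that $\o_1-\o_0$ vanishes on a fibre-saturated neighbourhood of $\hat W$. A secondary technical point is the global extension of the first-order data $\Phi$ so that $G_0=h$ on $\hat W$ while still matching $\o$ to first order along all of $S$; this is a bundle-extension problem over $S$ with prescribed behaviour over $\hat W\cap S$, and is where the compatibility of the two normal framings, coming from $f$ and from $h$, must be used.
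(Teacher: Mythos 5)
The paper gives no proof of this proposition to compare against: it is quoted verbatim (note the closing $\Box$) from \cite[Proposition 13]{CMRV}. Your proposal is the standard proof of the relative Weinstein neighbourhood theorem --- identify the symplectic normal bundles $(TS)^{\perp_\o}$ and $(TS')^{\perp_{\o'}}$, which are rank-$2$ symplectic (hence complex line) bundles with equal Euler numbers $S^2=S'^2$, by an isomorphism $\Phi$ covering $f$; build a diffeomorphism $G_0$ of tubular neighbourhoods matching $\o'$ to first order along $S$; and correct by a Moser isotopy generated by a primitive of $\o_1-\o_0$ vanishing along $S$ --- and this is also how the cited result is proved, so your first assertion is complete and in line with the source.

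For the ``moreover'' clause, the two steps you flag as delicate do close, and both use precisely the shrinking $\hat{W}\Subset W$ that the statement allows. For the bundle-extension problem: since $h$ is symplectic and carries $TS$ to $TS'$ over $W\cap S$, its differential restricts to a symplectic bundle isomorphism $(TS)^{\perp_\o}|_{W\cap S}\to (TS')^{\perp_{\o'}}|_{W'\cap S'}$ covering $f$; it differs from the restriction of any global isomorphism $\Psi$ (which exists by equality of Euler numbers) by a gauge transformation $A$ of $N_S$ over the disc $W\cap S$, and since this disc is contractible and $\mathrm{Sp}(2,\RR)$ is connected, $A$ is homotopic to the identity and can be cut off to a global gauge transformation equal to $A$ over a smaller disc and the identity outside $W\cap S$; composing with $\Psi$ extends $dh$ as needed. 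For the primitive: arrange $G_0=h$ on an intermediate ball $W''$ with $\hat{W}\Subset W''\Subset W$ (relative uniqueness of tubular neighbourhoods), and shrink the disc-bundle radius so that the fibrewise radial segments through points of $\hat{W}$ stay inside $W''$; then $\o_1-\o_0\equiv 0$ on a fibre-saturated neighbourhood of $\hat{W}$, the radial homotopy operator yields $\s\equiv 0$ there, the Moser flow is stationary on $\hat{W}$, and $g=G_0\circ\q_1=h$ on $\hat{W}$. With these two completions your outline is a correct proof of the proposition.
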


The following result proves that tubular neighborhoods of nice configurations of symplectic surfaces admit K\"ahler structures compatible with the symplectic form. 

\begin{theorem}[K\"ahler model for patterns of nice symplectic surfaces] \label{thm:kahler-model-union}
Let $(X, \o)$ be a symplectic $4$-manifold, and
let $S_j \subset X$ be compact symplectic surfaces intersecting nicely. Consider $\cU\subset X$ 
a small tubular neighborhood of the union $\bigcup_j S_j$. Then there exists 
a symplectomorphism $\f:\cU \to \cV$, where $\cV$ is a K\"ahler open $4$-manifold, and $\f(S_j) \subset \cV$ are K\"ahler Riemann surfaces (complex curves).
\end{theorem}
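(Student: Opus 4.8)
The plan is to build an explicit K\"ahler model $\cV$ as a \emph{plumbing} of holomorphic disc bundles over the Riemann surfaces underlying the $S_j$, and then to transport $\cU$ onto $\cV$ by means of the relative symplectic tubular neighbourhood theorem (Proposition \ref{prop:tubular1}), prescribing the symplectomorphism near the intersection points so that the local pieces attached to the different surfaces agree on their overlaps.

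First I would fix the data. Each $S_j$ is a compact symplectic surface, so a compatible almost complex structure makes it a Riemann surface $\Sigma_j$; let $P_j\subset\Sigma_j$ be the finite set of points where $S_j$ meets the other surfaces, and let $d_j=S_j^2\in\ZZ$ be the self-intersection. Pick a holomorphic line bundle $L_j\to\Sigma_j$ with $\deg L_j=d_j$, which exists for any integer degree. Using Remark \ref{rem:rem3} I arrange $(\o_{\Sigma_j},J_{\Sigma_j})$ to be flat near each point of $P_j$, with the flat scale matched to the Darboux charts of $X$ at the crossings, and with $\int_{\Sigma_j}\o_{\Sigma_j}=\int_{S_j}\o$ (both adjustments touching only the bulk of $\Sigma_j$). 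Then Lemma \ref{lem:kahler-line-bundle} provides a K\"ahler disc bundle $B_r(\Sigma_j)\subset L_j$ which is the flat product $V\x B_r(0)$ over each neighbourhood of a point of $P_j$.

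Next I construct $\cV$ by plumbing. Away from the marked points $\cV$ is the disjoint union of the disc bundles $B_r(\Sigma_j\setminus P_j)$. At each crossing $p=S_i\cap S_j$ I glue in a flat bidisc $\{(z,w):|z|,|w|<r\}\subset\CC^2$, identifying the base times fibre chart of $L_i$ near $p$ (base coordinate $z$, fibre coordinate $w$) and that of $L_j$ near $p$ (base coordinate $w$, fibre coordinate $z$) with the two coordinate directions of the bidisc; thus the fibre of $L_i$ becomes the base direction of $L_j$ and vice versa. Because all these charts are holomorphic and, by the previous step, carry the same flat K\"ahler form $\frac{\ii}{2}(dz\wedge d\bar z+dw\wedge d\bar w)$ on the overlaps, the glued object $(\cV,\o_L,J_L)$ is a well-defined K\"ahler $4$-manifold (here the hypotheses that the intersections are transverse, pairwise, and orthogonal are precisely what make this plumbing consistent). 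The zero sections assemble into holomorphic curves $C_j\subset\cV$ crossing transversely at the images of the marked points, exactly modelling the configuration of the $S_j$.

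It remains to produce $\f\colon\cU\to\cV$ with $\f(S_j)=C_j$. I first define $\f$ near the crossings: at $p=S_i\cap S_j$ the \emph{nice} hypothesis gives a Darboux chart of $X$ in which $S_i=\{w=0\}$, $S_j=\{z=0\}$ with $\o=\o_0$, and the bidisc at $p$ in $\cV$ carries the identical flat model with $C_i=\{w=0\}$, $C_j=\{z=0\}$; the identity of these charts is a symplectomorphism $h_p$ carrying $S_i,S_j$ to $C_i,C_j$. For each fixed $S_j$ I then choose an area preserving diffeomorphism $S_j\to C_j$ agreeing with the $h_p$ near the points of $P_j$ (possible by a relative Moser argument, since the local forms agree there and the total areas were matched above), and invoke the \emph{moreover} part of Proposition \ref{prop:tubular1}, in its form allowing the finitely many disjoint balls $\hat W_p$, $p\in P_j$, to extend it to a symplectomorphism $g_j$ of a tubular neighbourhood of $S_j$ onto one of $C_j$ coinciding with $h_p$ near each crossing; the hypothesis $S_j^2=\deg L_j$ is met by construction. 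Finally I set $\f=g_j$ on the tubular neighbourhood of each $S_j$; the pieces $g_i$ and $g_j$ agree near their common crossing $p$ because both equal $h_p$ there, so $\f$ is a well-defined symplectomorphism $\cU\to\cV$ sending each $S_j$ to the complex curve $C_j$. The delicate points are two: the flatness near the marked points granted by Lemma \ref{lem:kahler-line-bundle} is what lets the two disc bundles at a crossing glue to one flat bidisc with no curvature mismatch; and the genuine obstacle is the simultaneous gluing of the per-surface symplectomorphisms, which forces me to upgrade the relative tubular neighbourhood theorem to prescribe the map on all the disjoint balls $\hat W_p$ at once, while ensuring that no spurious intersections are created, so that the $g_j$ patch together into a single global $\f$.
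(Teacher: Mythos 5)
Your proposal is correct and follows essentially the same route as the paper: you build the K\"ahler model by plumbing the disc bundles $B_r(S_j)\subset L_j$ from Lemma \ref{lem:kahler-line-bundle}, flattened near the crossings via Remark \ref{rem:rem3}, glued by swapping base and fibre coordinates, and you transport $\cU$ onto this model using the relative part of Proposition \ref{prop:tubular1} so that the per-surface symplectomorphisms agree on the Darboux balls at the intersection points. The points you flag as delicate (matching areas via Moser, and applying the relative tubular neighbourhood theorem at several disjoint balls simultaneously) are handled the same way, mostly implicitly, in the paper's proof.
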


\begin{proof}
Let $\o_j$ be the restriction of $\o$ to $S_j$. Note that $(S_j,\o_j)$ admits many complex structures compatible with $\o_j$, so we select one and call it $J_j$. Consider a holomorphic line bundle $L_j \to S_j$ with $\deg L_j=S_j^2$, e.g.\ a holomorphic bundle associated to a divisor of degree $S_j^2$, and the 
K\"ahler structure $(\o_{L_j}, J_{L_j})$ defined in 
Lemma \ref{lem:kahler-line-bundle}, on 
some small disc bundle $B_r(S_j) \subset L_j$.
By Proposition \ref{prop:tubular1},
there exists a symplectomorphism 
$\f_j: \cU_j \to B_r(S_j)$, where $\cU_j\subset X$ is a small tubular neighborhood of $S_j$ in $X$.

Moreover, let $p$ be any point of intersection in $S_j \cap S_k$. Note that the intersection is nice, so there are only two surfaces going through $p$, and Darboux coordinates $(u_1,u_2)$ on a 
ball $W$ around $p$ so that 
 $$
 S_j=\{u_2=0\}, \quad S_k=\{u_1=0\}. 
 $$
By Remark 
\ref{rem:rem3}, we have neighbourhoods $V_j$ and $V_k$ of
$p$ in $S_j$ and $S_k$, respectively, so that we can assume that the complex structures $J_j$, $J_k$ give both flat 
K\"ahler structures. Next, we choose hermitian metrics $h_j$ and $h_k$ so that the second item of Lemma \ref{lem:kahler-line-bundle} applies and hence we have diffeomorphisms
 $$
 L_j|_{V_j} \cap B_r(S_j) \cong V_j\x B_r(0), \quad
 L_k|_{V_k} \cap B_r(S_k) \cong V_k\x B_r(0) 
 $$
that preserve the K\"ahler structures,
where the K\"ahler structure on the right hand side is the 
product flat K\"ahler structure.
If we arrange $V_j$, $V_k$ small so that $V_j=B_r(0)$ and
$V_k=B_r(0)$ then
 $$
 L_j|_{V_j} \cap B_r(S_j) \cong B_r(0)\x B_r(0), \quad
 L_k|_{V_k} \cap B_r(S_k) \cong B_r(0)\x B_r(0) \, .
 $$
Using the second item of Proposition \ref{prop:tubular1}, 
we can arrange $\f_j:\cU_j\to B_r(S_j)$ so that, when restricted to $W$, it
coincides with the Darboux chart $W \cong B_r(0)\x B_r(0)$,
that is $\f_j(u_1,u_2)=(u_1,u_2)$ on $W$.
Analogously, we arrange $\f_k:\cU_k\to B_r(S_k)$
to coincide with the Darboux chart. As $S_k=\{u_1=0\}$,
we have $\f_k(u_1,u_2)=(u_2,u_1)$ on $W$.

In $B_r(0)\x B_r(0)$ we have a flat K\"ahler structure with coordinates $(z,w)$, so we can glue
 $$
 L_j|_{V_j} \cap B_r(S_j) \cong B_r(0)\x B_r(0) \to B_r(0)\x B_r(0) \cong
 L_k|_{V_k} \cap B_r(S_k) \, 
 $$
via $R(z,w)= (w,z)$. The resulting manifold 
$B_r(S_j) \cup_R B_r(S_k)$ has a well-defined K\"ahler structure. Moreover, $\f_j$ and $\f_k$ give a well-defined 
symplectomorphism $\f_{jk}:\cU_j\cup \cU_k \to 
B_r(S_j) \cup_R B_r(S_k)$, since on the intersection 
$\cU_j\cap \cU_k=W$ we have arranged $\f_j=R\circ \f_k$.
 
 We perform the above gluing for all the intersection points $p$ of any pair of surfaces $S_j \cap S_k$, for $j \ne k$, and the result is a K\"ahler 
 manifold $\cV=\bigcup_{R_p} B_r(S_j)$, which contains the collection of complex curves $S_j$ intersecting nicely. Clearly $\cV$ is symplectomorphic to 
 $\cU=\bigcup_j \cU_j$, 
 which is a neighbourhood of the union $\bigcup_j
 S_j\subset X$ of surfaces intersecting nicely in $X$.
\end{proof}

We can extend the previous result when we have a collection of surfaces intersecting in a complex-like manner
(see Definition \ref{def:complex-like}).

\begin{proposition} [K\"ahler model for patterns of complex-like symplectic surfaces] \label{prop:extra}
    Let $X$ be a symplectic $4$-manifold, and suppose that $S_i\subset X$, $i\in I$, are compact
    symplectic surfaces such that at each intersection
    point of several of the surfaces, say $S_0,\ldots,
    S_l$ , there are
    Darboux coordinates $(z,w)$ so that 
    $S_j=\{w=a_j z\}$, for $0\leq j\leq l$, $a_0,\ldots,
    a_l\in \CC$ distinct complex numbers.
     Consider $\cU$ a small tubular neighborhood of the union $\bigcup_j S_j$. Then there exists 
a symplectomorphism $\f:\cU \to \cV$, where $\cV$ is a K\"ahler open $4$-manifold, and $\f(S_j) \subset \cV$ are K\"ahler Riemann surfaces (complex curves).
\end{proposition}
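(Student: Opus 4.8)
The plan is to reduce Proposition \ref{prop:extra} to Theorem \ref{thm:kahler-model-union} by first converting each multiple intersection of complex-like type into a nice (orthogonal, pairwise) intersection, while keeping track of a local K\"ahler model at each original intersection point. The essential observation is that the hypothesis gives us, at every multiple intersection point $p$, a \emph{holomorphic} local model: in the Darboux coordinates $(z,w)$ the surfaces $S_j=\{w=a_jz\}$ are complex lines through the origin of $\CC^2$, so a neighbourhood of $p$ in $\bigcup_j S_j$ is already biholomorphic and symplectomorphic to a union of complex lines in the standard flat K\"ahler $(\CC^2,\o_0,J_0)$. Thus locally we have nothing to prove; the work is in globalizing this and handling the parts of the surfaces away from the special points.

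First I would invoke Proposition \ref{prop:transverse-positive-implies-nice} to perturb the family $S_i$ near each multiple intersection point into a family $\tilde S_i$ intersecting nicely; note that the perturbation is $C^0$-small and supported in small charts around the intersection points. Then Theorem \ref{thm:kahler-model-union} applied to the nicely-intersecting family $\tilde S_i$ produces a symplectomorphism $\f:\tilde\cU\to\cV$ onto a K\"ahler open $4$-manifold carrying the $\f(\tilde S_i)$ as complex curves. The remaining issue is that we want a K\"ahler model for the \emph{original} surfaces $S_i$ with their genuine multiple intersections, not for the perturbed nicely-intersecting ones. The cleanest route is therefore to mimic the proof of Theorem \ref{thm:kahler-model-union} directly, replacing the two-surface orthogonal gluing at each point by a gluing dictated by the complex-like model.

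Concretely, I would proceed as in the proof of Theorem \ref{thm:kahler-model-union}: equip each $S_i$ with a compatible complex structure $J_i$, and by Remark \ref{rem:rem3} arrange $J_i$ to be flat near every point $p$ of intersection. Choose a holomorphic line bundle $L_i\to S_i$ with $\deg L_i=S_i^2$, endow $B_r(S_i)$ with the K\"ahler structure of Lemma \ref{lem:kahler-line-bundle}, and by Proposition \ref{prop:tubular1} obtain symplectomorphisms $\f_i:\cU_i\to B_r(S_i)$. The difference from the nice case is that several surfaces $S_0,\ldots,S_l$ meet at $p$; but the complex-like hypothesis supplies Darboux coordinates $(z,w)$ with $S_j=\{w=a_jz\}$, which is precisely a flat K\"ahler configuration of complex lines in $\CC^2$. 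Using the flat local model from the second part of Lemma \ref{lem:kahler-line-bundle} for each $L_j$ near $p$, and the relative tubular neighbourhood (second part of Proposition \ref{prop:tubular1}), I would arrange each $\f_j$ to coincide on a common ball $W$ around $p$ with a Darboux chart identifying $S_j$ with the line $\{w=a_jz\}$. The several local K\"ahler pieces $B_r(S_j)\cap\pi^{-1}(V_j)$ are then all canonically identified with the same flat ball in $\CC^2$, and the $l+1$ complex lines glue consistently; this replaces the single orientation-swapping gluing map $R(z,w)=(w,z)$ of the two-surface case. Gluing over all intersection points yields a K\"ahler manifold $\cV=\bigcup_{R_p}B_r(S_j)$ and a symplectomorphism $\f:\cU\to\cV$ carrying the $S_j$ to complex curves.

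The main obstacle is the consistency of the simultaneous gluing at a point where more than two surfaces meet. In Theorem \ref{thm:kahler-model-union} the two local K\"ahler models are glued by a single biholomorphism $R$, and consistency is automatic; here one must check that the $l+1$ individual identifications $\f_j|_W$ with the common flat chart $W\cong\{w=a_jz\}\subset\CC^2$ can be chosen \emph{simultaneously} compatible, i.e.\ that there is a single Darboux chart in which all surfaces are the prescribed complex lines and in which each $\f_j$ restricts correctly. This is exactly where the complex-like hypothesis is used: because all $S_j$ are complex lines in one fixed flat chart $(\CC^2,\o_0,J_0)$, the ambient K\"ahler structure near $p$ is unambiguous, so the relative neighbourhood theorem can be applied once, matching every $\f_j$ to this one flat chart at the same time. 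I expect the verification that the flat local structures from Lemma \ref{lem:kahler-line-bundle} for the distinct bundles $L_j$ agree on the common chart $W$ to be the only point requiring care, and it is handled just as in Theorem \ref{thm:kahler-model-union} by the freedom to rescale so that the relevant balls match.
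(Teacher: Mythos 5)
Your proposal is correct and takes essentially the same approach as the paper: the paper's proof likewise mimics Theorem \ref{thm:kahler-model-union}, flattening the K\"ahler structures near each multiple intersection point (Remark \ref{rem:rem3}, Lemma \ref{lem:kahler-line-bundle}) and using the relative part of Proposition \ref{prop:tubular1} to match every $\f_j$ to the single Darboux chart $\psi$ in which $S_j=\{w=a_jz\}$, so that the simultaneous gluing consistency you flagged is automatic. The one detail the paper makes explicit where you invoke a ``canonical identification'' is the unitary rotation $R_j\in\mathrm{SU}(2)$ sending the zero section $\{v_j=0\}$ of $L_j$ to the line $\{w=a_jz\}$; since each $R_j$ is an isometry of the flat K\"ahler structure, the glued space $\cV$, defined by $(u_j,v_j)\sim(u_k,v_k)$ iff $R_j(u_j,v_j)=R_k(u_k,v_k)$, carries a well-defined K\"ahler structure, and $R_j\circ\f_j=R_k\circ\f_k=\psi$ on overlaps gives the compatibility.
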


\begin{proof}
    We work as in Theorem \ref{thm:kahler-model-union}. 
We start by taking a K\"ahler structrure $(\o_j,J_j)$ on each $S_j$ and a holomorphic line bundle $L_j\to S_j$ of
degree $\deg L_j=S_j^2$. By Proposition \ref{prop:tubular1},
there is a symplectomorphism $\f_j:\cU_j\to B_r(S_j)$, where
$\cU_j\subset X$ is  a tubular neighbourhood of $S_j$, and
$B_r(S_j)\subset L_j$ is the disc bundle of radius $r>0$ around
$S_j$ in $L_j$. Let $(u_j,v_j)$ denote the coordinates of $L_j$, where $u_j$ is the coordinate of $S_j$.
 
For each point $p\in S_0\cap S_1\cap \ldots \cap S_l$, with $l\geq 1$, we take Darboux coordinates
$\psi\colon W\to B_r^4(0)$, which we denote $(z,w)$, such that
$S_j=\{w=a_jz\}$, where $a_0,\ldots,a_l\in \CC$ are distinct
complex numbers. We give $B_r^4(0)$ the flat K\"ahler structure
given by $\o_0=\frac{\ii}2 (dz\wedge d\bar z+ dw
\wedge d\bar w)$.
Take the map $R_j:B_r^4(0)\to B_r^4(0)$, $(z,w)=R_j(u_j,v_j)$,
given by the matrix
$$
 \frac{1}{\sqrt{1+|a_j|^2}}\left(\begin{matrix} 1 & -\bar{a}_j \\
 a_j & 1\end{matrix} \right) \in \mathrm{SU}(2).
 $$
This is an isometry (of the flat K\"ahler structures) which 
sends $\{v_j=0\}$ to $\{w=a_jz\}$.

Next we arrange
a neighbourhood $V_j=B_r(0)$ around $p$ in $S_j$ (using Remark \ref{rem:rem3}) so that
the K\"ahler structure $(\o_j,J_j)$ is flat on $V_j$. By the 
second item of Proposition \ref{prop:tubular1}, we arrange
the symplectomorphism
    $$
    \f_j: \f_j^{-1}( L_j|_{V_j}\cap B_r(S_j)) \subset \cU_j
    \to
    L_j|_{V_j}\cap B_r(S_j)\cong B_r(0)\x B_r(0),
    $$
that sends $S_j$ to $B_r(0)\x \{0\}=\{(u_j,0)\}$, so that the restriction of $\f_j$
coincides with the Darboux chart $R_j^{-1} \circ \psi:W\to
B_r^4(0)$,
where the coordinates in the right hand side are $(u_j,v_j)$. 

We perform the gluing $\cV=\bigcup_R B_r(S_j)$, where
we take the equivalence relation $(u_j,v_j)\sim_R (u_k,v_k)$ if
$R_j(u_j,v_j)=R_k(u_k,v_k)$. 
The space $\cV$ inherits a K\"ahler structure since in the gluing part the K\"ahler structures are flat. 
We have a well-defined symplectomorphism
 $$
 \f: \cU={\textstyle \bigcup_j}\,  \cU_j\to  \cV=
 {\textstyle\bigcup_R}\, B_r(S_j),
$$ 
by gluing the maps $\f_j$, 
since $R_j\circ \f_j=R_k\circ \f_k$ 
on $\cU_j\cap \cU_k\subset W$ (and the 
latter inclusion holds taking $r>0$ small). 
\end{proof}

\section{Representing symplectic divisors} \label{section:symplectic-divisors}

Let $(X,\o)$ be a symplectic $4$-manifold. A symplectic divisor is defined as a finite
formal sum $\sum a_i C_i$ where $C_i \subset X$ are (smooth) compact symplectic surfaces, 
$a_i\in \ZZ$. Clearly this has a well-defined homology class.
 We study now the possibility of realizing the homology class of a symplectic divisor $\sum a_i C_i$ by a (smooth) symplectic surface.
We need some conditions:
\begin{itemize}
    \item The divisor must be effective, i.e. $a_i \ge 0$.
    \item We assume the surfaces $C_i$ to
    intersect transversely and positively (the self-intersections $C_i^2$ may be negative).
\end{itemize}
Let us start with a simple case: the divisor $nC$ when $C^2>0$.

\begin{proposition}\label{prop:Cn}
    Let $X$ be a symplectic $4$-manifold and $C \subset X$ a symplectic surface with $C^2 > 0$.
    Then for any positive integer $n$ there exists a symplectic surface $C_n$ whose homology class is $[C_n]=n[C]$. The construction can be done inside an arbitrary tubular
    neighbourhood of $C$.
    
Moreover, for $N\geq 1$, 
    the family of surfaces $C_n$, $1\leq n \leq N$, can be constructed so that
    they intersect nicely.
\end{proposition}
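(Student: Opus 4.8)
The plan is to construct $C_n$ inside a tubular neighbourhood of $C$ by representing the class $n[C]$ as a graph-like surface in a holomorphic line bundle model, exploiting the positivity $C^2>0$. First I would invoke Lemma~\ref{lem:kahler-line-bundle} to replace a tubular neighbourhood $\cU$ of $C$ by a disc bundle $B_r(S)\subset L$, where $S$ is $C$ endowed with a compatible complex structure and $L\to S$ is a holomorphic line bundle with $\deg L = C^2 > 0$; by Proposition~\ref{prop:tubular1} there is a symplectomorphism $\f:\cU\to B_r(S)$ carrying $C$ to the zero section. The key point is that since $\deg L>0$, the bundle $L$ (or a suitable positive multiple $L^{\otimes n}$, whose degree is $n\,C^2$) admits holomorphic sections, and the zero locus of a generic holomorphic section of $L^{\otimes n}$ is a complex curve representing the class $n[C]$. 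Such a complex curve is automatically symplectic for the K\"ahler form $\o_L$, and it lives inside $B_r(S)$ provided we rescale; but a global holomorphic section need not have image inside a fixed small disc bundle, so instead I would work fibrewise.

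More concretely, the cleaner approach is to build $C_n$ directly as a multisection. In local holomorphic trivializations $L|_{B_\a}\cong B_\a\times\CC$ with coordinate $w$ on the fibre, I would take $C_n$ to be (the closure of) $\{w^n = \e\, s(z)\}$ for a holomorphic section $s$ of $L^{\otimes n}$ with simple zeros, and $\e>0$ small. Away from the zeros of $s$ this is an $n$-valued graph, hence an $n$-fold cover of $S$ branched over the $n\,C^2$ zeros of $s$; its homology class is $n[C]$ because it meets a generic fibre in $n$ points, and it is a complex (hence symplectic) curve for $\o_L$ since it is cut out holomorphically. Choosing $\e$ small keeps $C_n$ inside the prescribed small disc bundle $B_r(S)$, establishing the first assertion and the ``arbitrary tubular neighbourhood'' refinement.

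For the moreover statement, I would construct the $N$ surfaces $C_1,\dots,C_N$ simultaneously using $N$ \emph{distinct} generic holomorphic sections $s_1,\dots,s_N$ of the relevant powers, so that the resulting complex curves meet each other transversely and positively (complex curves always intersect positively, and genericity gives transversality and rules out triple points). Then their pairwise intersections are transverse and positive and no three meet at a point, so Lemma~\ref{lem:orthogonal} applies at each intersection point to make the intersections orthogonal, i.e.\ nice in the sense of Definition~\ref{def:nice}; alternatively Proposition~\ref{prop:transverse-positive-implies-nice} delivers niceness directly from transverse–positive intersections.

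The main obstacle I anticipate is controlling the global geometry of the multisection: ensuring that the branched-cover surface $\{w^n=\e s(z)\}$ is globally smooth and symplectic across the branch points (where the naive graph description degenerates), and that the construction genuinely stays within an \emph{arbitrarily small} tubular neighbourhood. Smoothness at the branch locus follows because $\{w^n=\e s\}$ is the zero set of a holomorphic function whose differential is nonzero there (the zeros of $s$ being simple), so $C_n$ is a smooth complex curve throughout; and the containment in $B_r(S)$ is arranged by scaling $\e\to 0$, at the cost of the branch points colliding with the zero section, which is harmless since we only need the homology class and the symplectic property. The positivity $C^2>0$ is used precisely to guarantee that $L^{\otimes n}$ has enough sections with the right number of zeros, and this is where the hypothesis is essential.
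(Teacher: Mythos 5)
Your construction takes a genuinely different route from the paper's. The paper forms the singular union $Z_n$ of $n$ parallel graphs $\{w=\lambda_{ni}\,\sigma(z)\}$ of multiples of one holomorphic section $\sigma$ of $L$ (which exists by fiat, since one may choose $L=\cO_C(p_1+\cdots+p_d)$, $d=C^2$), then perturbs the multiple intersection points via Proposition \ref{prop:transverse-positive-implies-nice} and resolves the resulting double points via Lemma \ref{lem:resolution}. You instead build $C_n$ in one stroke as the branched multisection $\{w^n=\varepsilon\, s(z)\}$, $s\in H^0(L^{\otimes n})$, which (when it exists with simple zeros) is a smooth connected complex, hence symplectic, curve; this elegantly bypasses the resolution lemma, and Riemann--Hurwitz even reproduces the same genus $ng-n+1+C^2\binom{n}{2}$, as adjunction forces. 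However, there is a genuine gap at the crux: the existence of a section of $L^{\otimes n}$ with \emph{simple} zeros. Your justification is that $\deg L^{\otimes n}=nC^2>0$, but positive degree does not guarantee any nonzero holomorphic section on a curve of positive genus: if $nC^2<g(C)$, a generic line bundle of degree $nC^2$ has $h^0=0$. Note that the paper never faces this issue, because it only needs a section of $L$ itself, which is arranged by choosing $L$ to be the bundle of a reduced effective divisor; you need sections of all the powers $L^{\otimes n}$, and that is a nontrivial Brill--Noether-type condition on $(C,L)$ which fails for generic choices.

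Concretely: with the natural choice $L=\cO_C(p_1+\cdots+p_d)$, the linear system $|L^{\otimes n}|$ may consist of the single divisor $n(p_1+\cdots+p_d)$ (this happens whenever $h^0(L^{\otimes n})=1$, e.g.\ for a Brill--Noether general complex structure of large genus, where no divisor of small degree moves). Then the only available "multisection" is $\{w^n=\varepsilon\,\sigma^n\}=\prod_{j=0}^{n-1}\{w=\zeta^j\varepsilon^{1/n}\sigma\}$ ($\zeta=e^{2\pi\ii/n}$), which is exactly the singular union of $n$ graphs meeting at the points $p_l$ --- the configuration the paper starts from --- so the missing ingredient is precisely the perturbation-and-resolution step you hoped to avoid. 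Worse, for the ``moreover'' part you need one $L$ whose powers $L^{\otimes n}$ all carry reduced sections for $2\le n\le N$, and for a generic curve of large genus a dimension count in the Jacobian indicates that no degree-$d$ bundle satisfies this. The gap is repairable, but only by making special choices that must be spelled out: e.g.\ take the complex structure on $C$ hyperelliptic (or $g\le 1$) and $L=\cO_C(d\,p)$ with $p$ a Weierstrass point, so that every $|nd\,p|$ contains a reduced divisor pulled back from $\PP^1$; or invoke Bertini when $nC^2\ge 2g$ so that $L^{\otimes n}$ is globally generated. A secondary, smaller issue: transversality of $C_n\cap C_m$ is not automatic (complex curves can be tangent), though it does follow by choosing the constants $\varepsilon_n$ generically, since $w^m/s_m$ restricts to a meromorphic function on the compact curve $C_n$ with finitely many critical values.
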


\begin{proof}
Using Lemma \ref{lem:kahler-line-bundle}, we can 
take a K\"ahler model $\cV=B_r(C)$ for a tubular neighborhood of $C$, i.e. a disc bundle of some 
holomorphic hermitian line bundle $L \to C$ with 
$d=\deg L=C^2>0$. We can choose $L=\cO_C(\sum_l p_l)$ associated to a divisor given by the sum of $d=C^2$ 
distinct points. Hence there exists a non-zero holomorphic section $\s:C \to L$, with zero set given by $\sum_{l=1}^d p_l$. 

For every $n\geq 1$, we take the sections $\s_{ni}=\l_{ni}\, \s$, $i=1,\dots , n$, where $\l_{ni} \ne 0$ are complex numbers so that
$\l_{ni} \ne \l_{mj}$, for all $n,m\geq 1$, 
$1\leq i\leq n$, $1\leq j\leq m$, $(n,i)\neq (m,j)$. We can take
$|\lambda_{ni}|$ small so that all graphs
$\G_{ni}=\G(\s_{ni})$ lie inside $\cV$. 
As the section $\s$ vanishes at the points $p_1, \dots , p_d$ with multiplicity one, the graphs 
$\G_{ni}$, for $1 \le i \le n$, 
intersect positively and transversely at the points $p_1, \dots, p_d$. 

For each $n\geq 1$, consider the union
$Z_n=\bigcup_{i=1}^n \G_{ni}$. 
By Proposition \ref{prop:transverse-positive-implies-nice}, 
we can make a $C^0$-small perturbation of the graphs $\G_{n1}, \dots , \G_{nn}$ so that they intersect nicely. 
We denote by $\tilde{\G}_{ni}$ the perturbed graphs, and consider the union $\tilde Z_n=\bigcup_{i=1}^n
\tilde{\G}_{ni}$, which has ordinary double points at the intersection of the graphs. Now we use Lemma \ref{lem:resolution} to 
do a resolution of the nice intersections, and we 
obtain a smooth symlectic surface $C_n$ whose homology class is $[C_n]=[\tilde Z_n]=[Z_n]=n [C]$.

To prove the second statement, take 
the whole collection of graphs $\G_{ni}$, for
$1\leq n \leq N$, $1\leq i \leq n$. There are 
 $1+2+ \ldots + N=\binom{N+1}{2}$ graphs, and they
 intersect transversely and positively. Applying
Proposition \ref{prop:transverse-positive-implies-nice},
we obtain the family of perturbed graphs $\tilde{\G}_{ni}$ intersecting nicely. Now
we do the process of symplectic resolution
of nice intersections for each $\tilde{Z}_n=
\bigcup_{i=1}^n \tilde{\G}_{ni}$, to obtain a symplectic
surface $C_n$, and all intersections $C_n\cap C_m$,
for $1\leq n\neq m \leq N$, are nice.
\end{proof}

It is relevant to compute the invariants of $C_n$ in 
Proposition \ref{prop:Cn}.

\begin{lemma}
In the situation of Proposition \ref{prop:Cn}, if
$C^2=k>0$ and $g(C)=g\geq 0$, then
$C_n$ has $C_n^2=n^2k$ and $g(C_n)=ng-n+1 +k\binom{n}{2}$.
\end{lemma}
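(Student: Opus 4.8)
The plan is to compute the two invariants $C_n^2$ and $g(C_n)$ directly from the construction of $C_n$ in Proposition \ref{prop:Cn}. Recall that $C_n$ is obtained as a symplectic resolution of $\tilde Z_n=\bigcup_{i=1}^n \tilde\G_{ni}$, a union of $n$ perturbed graphs, each homologous to $C$. Since resolution of nice intersections (Lemma \ref{lem:resolution}) preserves the homology class, we have $[C_n]=n[C]$, and the self-intersection is immediate from bilinearity of the intersection form:
\[
C_n^2=(n[C])^2=n^2\,C^2=n^2 k.
\]
This settles the first invariant with no real work.

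For the genus, the key observation is that $g(C_n)$ should be computed via the Euler characteristic, since the resolution construction is a surgery that is transparent at the level of $\chi$. First I would count the intersection points being resolved. The $n$ graphs $\G_{ni}$ are all homologous to $C$, so each pair intersects in $C^2=k$ points, giving $\binom{n}{2}$ pairs and hence $k\binom{n}{2}$ double points in total (all distinct after the nice-intersection perturbation). The disjoint union $\bigsqcup_{i=1}^n \tilde\G_{ni}$ consists of $n$ copies of $C$, so its Euler characteristic is $n\cdot\chi(C)=n(2-2g)$.

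The main step is to track how $\chi$ changes under the resolution of a single nice intersection point. In Lemma \ref{lem:resolution}, each transverse double point is replaced: locally two discs meeting at a point (Euler characteristic contribution $2-1=1$ for the two disc centers identified) are replaced by a single annulus (Euler characteristic $0$). Thus each resolved node \emph{decreases} $\chi$ by $1$ relative to the normalization, i.e. relative to the disjoint union of graphs each node-resolution contributes $-2$ to $\chi$ (the standard formula: smoothing a node of a nodal curve lowers $\chi$ by $2$, equivalently raises the genus by $1$). Applying this to all $k\binom{n}{2}$ nodes:
\[
\chi(C_n)=n(2-2g)-2k\binom{n}{2}.
\]
Converting via $\chi(C_n)=2-2g(C_n)$ yields
\[
2-2g(C_n)=n(2-2g)-2k\binom{n}{2},
\]
and solving gives $g(C_n)=ng-n+1+k\binom{n}{2}$, as claimed.

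I expect the only subtle point to be pinning down the exact Euler-characteristic bookkeeping in the resolution, namely verifying that smoothing each node raises the genus by exactly one. The cleanest way to see this is to note that in the model $\{zw=0\}\rightsquigarrow\{zw=\e\}$ of Lemma \ref{lem:resolution}, the two intersecting discs (a disjoint pair after normalization, contributing local $\chi=2$) are replaced by a single annulus ($\chi=0$), a net local change of $-2$, which is precisely the genus-raising contribution. Everything else is an identity check; one may alternatively confirm the final formula against the adjunction expectation, since for a complex curve the analogous genus formula reads $2g(C_n)-2=n(2g-2)+n^2k - nk = n(2g-2)+k(n^2-n)$, matching $k(n^2-n)=2k\binom{n}{2}$, which provides a reassuring consistency check.
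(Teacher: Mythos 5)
Your proof is correct, and it reaches the same two counts as the paper (the $k\binom{n}{2}$ double points and the final formula), but it organizes the genus computation differently. The paper argues inductively: $C_n$ is built from $C_{n-1}$ by resolving $C_{n-1}\cup\tilde\Gamma_{nn}$ at $(n-1)k$ nodes, which gives the recursion $g(C_n)=g(C_{n-1})+g(\tilde\Gamma_{nn})+(n-1)k-1$ (connected sum at one point, plus one handle per extra gluing point), and then sums the recursion. You instead do a single global Euler-characteristic count relative to the normalization: $\chi(C_n)=n(2-2g)-2k\binom{n}{2}$. The two are the same underlying topology, but your version avoids the induction and makes the bookkeeping one-shot; the paper's version mirrors the actual inductive structure of the construction. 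Two small points to tighten in your write-up. First, passing from $\chi(C_n)$ to $g(C_n)$ via $\chi=2-2g$ requires $C_n$ to be connected; this holds here because $k=C^2>0$ forces every pair of graphs to intersect, so $\tilde Z_n$ and hence its resolution are connected — worth saying explicitly, since the formula fails for a disconnected smoothing. Second, your sentence ``each resolved node decreases $\chi$ by $1$ relative to the normalization'' is misstated: the drop is $1$ relative to the \emph{nodal} curve (local model: two discs joined at a point have $\chi=1$, the annulus has $\chi=0$), and $2$ relative to the normalization (two disjoint discs, $\chi=2$); your displayed formula uses the correct value $-2$, so only the wording needs fixing. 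Your adjunction cross-check at the end agrees with the remark following the lemma in the paper.
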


\begin{proof}
As $[C_n]=n[C]$, clearly $C_n^2=n^2 C^2= n^2 k$.
The construction of $C_n$ is done by resolving the double points of $\tilde Z_n= \bigcup_{i=1}^n \tilde{\G}_{ni}$. The number of such double
points equals the number of intersections of $\tilde \G_{ni}$ and $\tilde \G_{nj}$, $i\neq j$. For $i, j$ fixed, we have $k=C^2$ points, so the total number of double points in $\tilde Z_n$
is $k\binom{n}{2}$. In order to compute the genus, recall that $C_n$ can be inductively constructed from $C_{n-1}$ as the resolution of $C_{n-1} \cup \tilde \G_{nn}$ at $(n-1)k$ double points, so we have the relation $g(C_n)=g(C_{n-1})+g(\tilde \G_{nn})+(n-1)k-1$. Applying this recursively we get
 $$
 g(C_n)= \sum_{i=1}^n g(\tilde{\G}_{in})+ k\binom{n}{2}
 -(n-1) =ng-n+1 +k\binom{n}{2}.
 $$
\end{proof}

\begin{remark}
Alternatively, the genus $g(C_n)$ can be computed by the (symplectic) adjunction formula: if $K_X$ is the canonical divisor of $X$, then $K_X \cdot C_n + C_n^2=2g(C_n)-2$. This is the method employed in \cite{Mu-jems}.
\end{remark}

These results are used in \cite[Proposition 3.2]{Mu-jems}
by starting with a symplectic surface $T_1$ with $k=T_1^2=18$ and
$g=g(T_1)=10$, to obtain $T_n=nT_1$ with $g(T_n)=10n-n+1
+18\binom{n}{2}=9n^2+1$.

\bigskip

Next we move on to more general divisors.

\begin{theorem}\label{thm:divisors}
Consider $C_i \subset X$ symplectic surfaces intersecting transversely and positively. Consider a
homology class 
$[D]=\sum_i a_i [C_i]$ with $a_i>0$ integers, and such that $[D] \cdot [C_j] \ge 0$, for all $j$. 

Then, inside a  
tubular neighborhood of $\bigcup_i C_i$, there exists a symplectic surface $\S$ representing the class $[D]$, and intersecting positively and transversely with all $C_j$. 
\end{theorem}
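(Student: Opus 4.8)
The plan is to turn the problem into a complex-analytic one and then smooth the singular divisor $D=\sum_i a_iC_i$ holomorphically. First I would apply Proposition~\ref{prop:transverse-positive-implies-nice} to perturb the $C_i$ near their intersection points so that they intersect \emph{nicely}; this perturbation is $C^0$-small, so it does not change the classes $[C_i]$, hence neither $[D]$ nor the numbers $[D]\cdot C_j$. Then I invoke Theorem~\ref{thm:kahler-model-union} to replace a tubular neighbourhood $\cU$ of $\bigcup_i C_i$ by a symplectomorphic K\"ahler open surface $\cV$, $\f\colon\cU\to\cV$, in which each $\f(C_i)$ is a complex curve and the crossings are transverse intersections of coordinate axes. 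Since a smooth complex curve in a K\"ahler surface is automatically symplectic and meets every other complex curve positively, it suffices to produce, inside a shrinking of $\cV$, a \emph{smooth} complex curve $\Sigma$ homologous to $D=\sum_i a_i\f(C_i)$ which contains no component $\f(C_i)$ and which is transverse to each of them; pulling $\Sigma$ back by $\f^{-1}$ then yields the desired surface, with $[\Sigma]\cdot C_j=[D]\cdot C_j\ge 0$.

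The construction of $\Sigma$ amounts to smoothing the effective holomorphic divisor $D$, whose line bundle $\cO_{\cV}(D)$ carries a canonical section $s_0$ with zero divisor exactly $D$. I would build a nearby section $s$ with smooth zero locus from two local models. At a crossing point $p\in C_i\cap C_j$ with coordinates $C_i=\{w=0\}$, $C_j=\{z=0\}$, the divisor is locally $\{w^{a_i}z^{a_j}=0\}$, and I replace it by $\{w^{a_i}z^{a_j}=\e_p\}$ on a small ball; for $\e_p\ne 0$ this is a smooth complex curve locally homologous to $a_iC_i+a_jC_j$ (for $a_i=a_j=1$ this is exactly the smoothing of Lemma~\ref{lem:resolution}). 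Away from the crossings, over the smooth part of each component $C_j$, the curve $\Sigma$ must consist of sheets interpolating between the smoothed corner pieces, and these sheets are cut out along $C_j$ by a section of the restricted bundle $\cO_{\cV}(D)|_{C_j}$, which has degree $[D]\cdot C_j\ge 0$. This is precisely where the hypothesis enters: the positivity $[D]\cdot C_j\ge 0$ is what permits $s$ not to vanish identically on $C_j$ and to meet $C_j$ in the correct number of transverse points (and when $[D]\cdot C_j=0$ the curve $\Sigma$ simply becomes disjoint from $C_j$ near the smooth part, which is a vacuously transverse and positive intersection).

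Concretely, I would obtain $s$ by perturbing the canonical section to $s=s_0+\sum_p \e_p\,\sigma_p$, with holomorphic corrections $\sigma_p$ realizing the corner smoothings, and choose the parameters generic and small; for small parameters $\Sigma=\{s=0\}$ stays $C^0$-close to $\bigcup_i C_i$, hence inside the prescribed tube, while genericity makes $\Sigma$ smooth and transverse to each $C_j$. Here the self-intersections $C_j^2$ are allowed to be negative, which is exactly the situation of a chain of rational curves destined to be blown down: such a component admits no parallel copies in its own normal bundle (unlike the case $C^2>0$ of Proposition~\ref{prop:Cn}), so the sheets of $\Sigma$ lying over $C_j$ must be supplied through the crossings with the neighbouring curves, the net positivity along $C_j$ being guaranteed by $a_jC_j^2+\sum_{i\ne j}a_i(C_i\cdot C_j)=[D]\cdot C_j\ge 0$.

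The main obstacle is the global matching in this last step: assembling the local corner smoothings $\{w^{a_i}z^{a_j}=\e_p\}$ together with the inter-crossing sheets over each $C_j$ into a single \emph{globally smooth} curve in the class $[D]$, while keeping the object holomorphic (so that positivity of all intersections is automatic) and confined to the tubular neighbourhood. The delicate cases are the components with $C_j^2<0$ and the boundary case $[D]\cdot C_j=0$, where $\cO_{\cV}(D)$ need not be globally generated and the smoothing cannot be obtained from a naive Bertini argument but must be constructed by hand; controlling the number and the positivity of the zeros of $s$ along each $C_j$ by means of the nef condition $[D]\cdot C_j\ge 0$ is the crux of the proof.
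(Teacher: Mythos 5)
Your reduction (nice intersections via Proposition \ref{prop:transverse-positive-implies-nice}, then the K\"ahler model of Theorem \ref{thm:kahler-model-union}) and your local model $\{w^{a_i}z^{a_j}=\epsilon_p\}$ at the crossings coincide with the paper's strategy, and you have correctly located where the hypothesis $[D]\cdot[C_j]\ge 0$ must enter. But the step you yourself flag as ``the main obstacle'' is not a deferrable technicality: it is the entire content of the theorem, and the mechanism you propose for it --- a \emph{holomorphic} perturbation $s=s_0+\sum_p\epsilon_p\,\sigma_p$ made smooth and transverse by a Bertini-type genericity argument --- cannot work in general, because the required holomorphic sections need not exist. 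The restriction $\cO_{\cV}(D)|_{C_j}$ is a holomorphic line bundle of degree $n_j=[D]\cdot[C_j]$; when $n_j=0$ it is in general a non-trivial degree-zero bundle, hence has no nonzero holomorphic sections at all, and when $0<n_j<g(C_j)$ it generically has $h^0=0$ as well. Moreover $\cV$ is only a germ of a neighbourhood of a configuration that may be a contractible chain of negative curves, where holomorphic data are extremely rigid: there is no ample twist, no global generation, and no Bertini theorem available. So ``keeping the object holomorphic so that positivity is automatic'' is exactly what must be given up; note that the application in \cite{Mu-jems} needs precisely the boundary case $[D]\cdot[C_j]=0$ for the curves to be contracted.

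The paper resolves this by abandoning global holomorphy of the perturbing section. It constructs by hand a \emph{smooth} section $s_C$ of $L|_C$ vanishing exactly at $n_j$ chosen points of each $C_j$ --- this uses only the topological fact that the smooth bundle $L|_{C_j}$ has degree $n_j\ge 0$, via a degree-count and extension argument --- arranged to be holomorphic near its zeros and locally constant near the double points; it then extends $s_C$ to a section $s$ on all of $X$ by pulling back along a retraction $\pi:\cV\to C$ which is the holomorphic projection away from the double points, and sets $\tilde\sigma_\epsilon=\sigma-\epsilon s$, where $Z(\sigma)=D$. Smoothness of $\Sigma=Z(\tilde\sigma_\epsilon)$ for generic small $\epsilon$ follows from parametric transversality, and positivity of $\Sigma\cap C_j$ holds because both $\sigma$ and $s$ are holomorphic near the chosen points. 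The price of non-holomorphy is that $\Sigma$ is no longer automatically symplectic: the paper must prove the pointwise inequality $|\partial\tilde\sigma_\epsilon|>|\bar\partial\tilde\sigma_\epsilon|$ along $\Sigma$ by explicit estimates in the local models $\sigma=w^a$ and $\sigma=w^az^b$, exploiting that $|\epsilon|^{1-\frac{1}{2b}}\gg|\epsilon|$ as $\epsilon\to 0$. These estimates, together with the constructions of $s_C$ and of the retraction $\pi$, form the technical heart of the proof and are entirely absent from your proposal.
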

In particular, if $[D] \cdot [C_j]=0$ for some $j$, then $\S$ is disjoint from $C_j$.
\begin{proof}
We can assume that the $C_i$ intersect nicely by Proposition \ref{prop:transverse-positive-implies-nice}, 
and also that $X$ is K\"ahler by taking $X=\cV$ a K\"ahler 
model for a tubular neighborhood of the union $C=\bigcup_j C_j$, 
by Theorem \ref{thm:kahler-model-union}.
Consider the line bundle $L=\cO_X(D)$.
Let us call $n_j=[D]\cdot [C_j]  \ge 0$. 
Note that $C$ has singular points the points of intersection $C_j \cap C_k$, and these are ordinary double points. Select $n_j$ points in each $C_j$, distinct from the singular points of $C$, 
let us denote them $p_{j1}, \dots , p_{jn_j}$. We proceed in steps. 

\medskip

\noindent \textbf{Step 1}: We start by constructing a smooth section $s_C:C \to L|_C$ vanishing exactly at the points $p_{ji}$,
holomorphic in an open set containing the points 
$p_{ji}$, and constant in a neighbourhood of the double points.

Take some fixed curve $C_j$. The degree $\deg(L|_{C_j})=
[D]\cdot [C_j]=n_j$. We can assume that the points $p_{j1},
\dots , p_{jn_j}$ lie inside some $U_j \subset C_j$ which is biholomorphic to a disc $B_1(0) \subset \CC$ and that does not contain singular points. Consider some $r<1$ so that
the points are in $B_r(0)$, and the complement $V_j=C_j-B_r(0)$. The line bundles $L|_{U_j}$ 
and $L|_{V_j}$ are both (smoothly) trivial, and the change of trivialization is a degree $n_j$ map 
$q:U_j\cap V_j=B_1(0)-B_r(0)\to \CC^*$. We can take the trivialization $L|_{U_j} \cong U_j\x \CC$ holomorphic, 
with coordinates $(z,w)$. 
Take the local section $w=s(z)=\prod_{i=1}^{n_j} (z-p_{ji})$, which is a polynomial of degree $n_j$. 
In the trivialization $L|_{V_j}\cong V_j\x\CC$, we
have $\hat s(z)= q(z)^{-1} \prod_{i=1}^{n_j} (z-p^j_i)$, defined
on the intersection $A=U_j\cap V_j \cong B_1(0)-B_r(0)$. This is a degree $0$ map. 

On the other hand, if we contract the $1$-skeleton of $V_j$, then we have a continuous surjective map $\varpi:
V_j \to B_1(0)$, which sends the $1$-skeleton to $\{0\}$, 
and the annulus $A$ is sent by $\varpi$ to $B_1(0)-B_r(0)$ in a reversed way (inside-out), that is, with the map 
$\psi(z)=\frac{r}{z}$. We consider the map
$\hat{s}\circ \psi:B_1(0)-B_r(0)\to \CC^*$, which is of degree $0$, hence we can extend it to a map $g:B_1(0) \to \CC^*$, which
can be assumed to be $C^\infty$ by a small perturbation if necessary.
 Gluing the local section $s$ on $L|_{U_j}$ with $g \circ \varpi$ on $L|_{V_j}$, we get a 
smooth section $s_j:C_j\to L|_{C_j}$. 
Note that $s_{j}$ has zero set exactly the points $p_{ij}$,
$i=1,\ldots,n_j$. Moreover, $s_j$ is holomorphic on $U_j$.

In the construction above,
let $q_{j1},\ldots, q_{jr_j}\in V_j$ be the double
points that are in $C_j$. We can assume that they
are not in the $1$-skeleton, so the images $\varpi(q_{ji})$
are distinct points in $B_r(0)$. Therefore the extension $g(z)$
can be chosen so that the images $g(q_{ji})\in \CC^*$ take
any prefixed values. This means that we can prefix the values of
$s(q)$, for all double points $q$.

We define the section $s:C\to L|_C$ inductively. For $C_1$ it is the section $s_1$. Now define $s_2:C_2\to L|_{C_2}$ by requiring 
that $s_2(q)=s_1(q)$ for any $q\in C_1\cap C_2$. 
Therefore $s_1,s_2$ glue to a section $s_{12}:C_1\cup C_2 \to L|_{C_1\cup C_2}$. We iterate this process until we get 
a smooth section 
$$
s_C:C=C_1\cup \ldots\cup C_r \to L|_C=L|_{C_1\cup\ldots\cup C_r}\, .
$$
whose zero set consists precisely on the points $p_{ij}$, and $s_C$ is holomorphic in a neighborhood of its zero set.

\medskip

\noindent \textbf{Step 2}: Recall that $X=\cV$ is a tubular neighborhood of $C=\bigcup_j C_j$. We aim to see that there is a smooth retraction from $\cV$ to $C$, which coincides with the (holomorphic) normal bundle projection outside a neighborhood of the double points. Around each double point $q\in C_j\cap C_k$, we have Darboux coordinates $(z,w)$
with $C_j=\{w=0\}$, $C_k=\{z=0\}$. 
We can assume that
these coordinates are defined in the set where $|zw|\leq \eta^2$, $0 \le |z|, |w| \le 2\eta$, for $\eta>0$ small. We define the map:
 $$
 \phi_q(z,w)=\left\{ 
 \begin{array}{ll} \left(\frac{|z|-(|z||w|)^{1/2}}{2\eta-(2\eta|w|)^{1/2}}\,z,0\right), \qquad & |w| \le |z| \le 2\eta , \medskip \\
\left(0,\frac{|w|-(|z||w|)^{1/2}}{2\eta-(2\eta |z|)^{1/2}}\,w\right), 
\qquad & |z|\leq |w| \leq 2 \eta. \end{array}\right.
$$
This can be extended by the projection $(z,w) \to (z,0) \in C_j$ for 
$|z|\geq 2 \eta$, and by $(z,w) \to (0,w) \in C_k$ for $|w|\geq 2 \eta $. 
The map $\phi_q$ collapses, in a neighbourhood of $q$,
a tubular neighbourhood of $C_j\cup C_k$
onto $C_j\cup C_k$, and it can be made a smooth 
retraction by perturbing slightly. By 
taking holomorphic trivializations of the normal bundle, and
gluing all $\phi_q$, we get a smooth map 
$$
\pi: \cV \to C={\textstyle\bigcup_j}\,  C_j\, ,
$$
such that it is a holomorphic projection outside a small neighbourhood of the double points.
Now let 
 $$
 s=\pi^* s_C \in H^0(X,\pi^*L_C),
 $$
a section of the pull-back bundle $\pi^*L_C \to X$. This section vanishes along the discs $\pi^{-1}(p_{ji})$ 
over the $p_{ji}$ (recall that $X$ 
is a tubular neighborhood of $C$, identified with a disc bundle outside the double points). 
Note that $\pi^*L_C \cong L$ is an isomorphism as 
$C^{\infty}$-bundles, since $\pi$ is a deformation retraction with contractible fibers, so we can identify the section $s=\pi^* s_C$ with a smooth section of $L=\cO_X(D)$. Moreover, $s$ is holomorphic in a neighborhood of its zero set, since $s_C$ is holomorphic near its zero set $(s_C)^{-1}(0)=\{p_{ji}\}$, and $\pi$ is holomorphic in a neighborhood of the discs $\pi^{-1}(p_{ji} ) \subset X$.

\medskip

\noindent \textbf{Step 3}: Consider $\s \in H^0(X,L)$ the holomorphic section with $Z(\s)=D=\sum_i a_i C_i$, and let 
$\tilde \s_\epsilon=\s - \epsilon s$, a smooth section of $L$. 
We claim that the map 
\[
F: \CC^* \times X \to L, \qquad 
F(\epsilon,x) =  \s(x) - \epsilon \, s(x),
\]
is transverse to the zero section, where we take
$ \epsilon$ a complex number (which will be small later).
We must see that in any point $(\epsilon_0,x_0)$,
$\epsilon_0\neq 0$, where $F(\epsilon_0,x_0)=0$, then
$d_{(\epsilon_0,x_0)} F:\CC \times T_{x_0} X \to L_{x_0}$ is surjective. 
The map $F$ is holomorphic in the variable $\epsilon$, and $\frac{dF}{d\epsilon}(\epsilon_0,x_0) =-s(x_0)$, so if $s(x_0) \ne 0$ this gives surjectivity. It remains to see the points $x_0\in Z(s)$, which lie in the transversal discs at the points $p_{ji}\in C_j$.
At such a point, we have a chart $U\subset X$ with coordinates $(z,w)$,
where $C_j\cap U=\{(z,0)\}$, and $w$ is the vertical
coordinate in the
(holomorphic) tubular neighbourhood $X=\cV$. We
arrange so that $p_{ji}=(0,0)$, and  the projection $\pi(z,w)=z$ thus $x_0=(0,w_0)$. We take
a holomorphic trivialization $L|_U \cong U\x\CC$,
with coordinates $(z,w,u)$, so that $s(z,w)=z$.
Also $\s(z,w)=w^{a_j}$, since $Z(\s)=\sum a_j C_j$,
and $p_{ji}\in C_j$.
Therefore, in these coordinates, $F$ is the map
 $$
 u=F(\epsilon,z,w)= w^{a_j}-\epsilon \, z,
 $$
and we look at a point $(\epsilon_0,0,w_0)$. As
$F(\epsilon_0,0,w_0)=0$, we have $w_0=0$. Now we
look at $\frac{dF}{dz}=-\epsilon \neq 0$, hence the result.

\medskip

\noindent \textbf{Step 4}:
By parametric transversality, since $F$ is transversal to zero, we see that for a generic value of $\epsilon \in B_1(0)$ the map $\tilde \s_\epsilon$ 
is transverse to $0$, hence the set 
\[
\S=Z(\tilde \s_\epsilon)=\{x \in X\, | \, \s (x)=\epsilon\, s(x)\}
\]
is a smooth manifold. 
To see that $\S$ is a closed manifold, consider 
$Y=X - B_r(C)$, $0<r<1$, some neighborhood of the boundary of $X$. Since $\s^{-1}(0)=C$, $|\s| \ge K_1$ for some constant $K_1>0$ on $Y$. Let $K_0$ be an upper bound for $|s|$ on $X$. Then
\[
|\tilde \s_\epsilon(x)| \ge |\s(x)|-|\epsilon|\, |s(x)| 
\ge K_1-|\epsilon| K_0>0,  
\]
for $|\epsilon|<\frac{K_1}{K_0}$. For such generic 
$\epsilon\in \CC$, 
the zero set $Z(\tilde s_\epsilon)$ lies inside the open set $B_r(C) \subset X$, so it is closed submanifold of $X$.

On the other hand, $\S \cap C=Z(\s - \epsilon s) \cap C=\{p_{ji}\}$ because $\s$ vanishes on $C$ and $Z(s) \cap C=\{p_{ji}\}$. The intersections $\S \cap C$ are positive, because 
both $s$ and $\s$ are 
holomorphic near the $p_{ji}$, 
so $\tilde \s_\epsilon$ is also holomorphic near these points. Clearly, the homology class of $\S$ is 
\[
[\S]=[Z(\s - \epsilon s)]=[Z(\s)]=[D] \in H_2(X,\ZZ),
\]
as can be seen by letting $|\epsilon| \to 0$. 

\medskip

\noindent \textbf{Step 5:} It remains to see that $\S$ is symplectic if $\epsilon$ 
is small. For this we must prove that $|\bd \tilde \s_\epsilon(x)| 
> |\bar \bd \tilde \s_\epsilon(x)|$ 
at any point $x \in \S$. Here $|\cdot|$ denotes the norm with respect to a 
Hermitian metric on $L$, and $\bd \tilde \s_\epsilon$ 
and $\bar \bd \tilde \s_\epsilon$ are the $(1,0)$ and $(0,1)$ parts of $\nabla \tilde \s_\epsilon$ for any choice of connection. 
Note that as $\tilde \s_\epsilon(x)=0$, then $\bd \tilde \s_\epsilon(x)$ and $\bar \bd \tilde \s_\epsilon(x)$ are obtained by simply applying 
the operators to the local functions in a trivialization and there is no need to specify the connection. 

Take a point $x \in \S$, so $\s(x)=\epsilon s(x)$. 
We know that $s$ is holomorphic in some neighborhood of $s^{-1}(0)$, 
so there is some $r_1>0$ such that if $|s(x)|<r_1$ then $s$ is holomorphic in some neighborhood of $x$.
Thus if $|s(x)|<r_1$ then $\S$ is 
symplectic near $x$, being (locally) the zero set of a holomorphic function in a K\"ahler manifold.
So assume $|s(x)| \ge r_1$ from now on. 
Then $|\s(x)|=|\epsilon s(x)| \ge |\epsilon|\, r_1$
Let $(z,w)$ be coordinates for $X$ near the point $x$. 
We have two cases: 

\begin{itemize}
    \item 
If $x$ is away from a double point, then
$\s(z,w)=w^a$ for some $a \ge 1$, so $|w|^a \ge |\epsilon|\, r_1$, and $|w| \ge (r_1 |\epsilon|\,)^{\frac1a}$. Hence
\begin{align*}
    | \bd \tilde \s_\epsilon| &= |aw^{a-1} d w - \epsilon 
    \bd s| \ge a |w|^{a-1} 
    - |\epsilon|\, | \bd s| \ge a r_1^{\frac{a-1}{a}} |\epsilon|^{\frac{a-1}{a}}-|\epsilon|\, K_2 \\
    &= |\epsilon|^{\frac{a-1}{a}}( a r_1^{\frac{a-1}{a}}
    -|\epsilon|^{\frac{1}{a}} K_2) >  |\epsilon|\, K_2 \ge 
    |-\epsilon\, \bar \bd s| 
    =| \bar \bd \tilde \s_\epsilon|, 
\end{align*}
for $\epsilon$ small,  
where we denote $K_2>0$ a constant so that 
$|s|, |\bd s|, |\bar \bd s| \le K_2$ on $X$, 
and we have used that  $|\epsilon|^{\frac{a-1}{a}} \gg |\epsilon|$ 
as $|\epsilon| \to 0^+$.

\item 
If $x$ is near a double point, we can take
the coordinates $(z,w)$ centered at the double point. 
Thus $\s(z,w)=w^a z^b$, for some $a,b\geq 1$.
As $\tilde\s_\epsilon(z,w)=0$, we have 
$|\s(z,w)|=|w|^a |z|^b = |\epsilon|\, |s(z,w)|$, so we have 
\begin{align*}
\bd \s & =a w^{a-1} z^b dw+bw^az^{b-1} dz \,  , \\
r_1 |\epsilon|& \le |w|^a |z|^b \le K_2 |\epsilon|   \, .
\end{align*}
Here either 
$|z|^b \le (K_2 |\epsilon| )^{\frac12}$ or $|w|^a\le (K_2 |\epsilon| )^{\frac12}$. Both cases are similar, so we do the first one. If
$|z|^b \le (K_2 |\epsilon|)^{\frac12}$ then 
$$
|w|^{a} |z|^{b-1} =\frac{|w|^a |z|^b}{|z|} \ge \frac{r_1 |\epsilon|}{(K_2 \, |\epsilon| )^{\frac{1}{2b}}}
=K_3 \,|\epsilon|^{1-\frac{1}{2b}}\, ,
$$
for some $K_3>0$. Hence 
\[
|\bd \s| =|a \,w^{a-1} z^b dw+b\,w^az^{b-1} dz| \ge |b \,w^{a} z^{b-1}| \ge  b K_3\, |\epsilon|^{1-\frac{1}{2b}}\, ,
\]
where we have used a flat K\"ahler chart around the double point, so that $dw, dz$ are an orthonormal basis of $1$-forms. This can be done as in Theorem \ref{thm:kahler-model-union}. Alternatively, we can work adding some constant to control the norms of the $1$-forms.

With this, 
\[
|\bd \tilde \s_\epsilon| \ge |\bd \s|- |\epsilon|\, |\bd s| \ge bK_3  \, |\epsilon|^{1-\frac1{2b}} - |\epsilon|\, K_2 
> |\epsilon| \, K_2 \ge |-\epsilon\, \bar \bd s|= |\bar \bd \tilde \s_\epsilon|, 
\]
for $\epsilon$ small,
where we have used that $|\epsilon|^{1-\frac{1}{2b}} 
\gg |\epsilon|$, as $|\epsilon| \to 0^+$. This yields that $\S$ is a symplectic manifold and completes the proof.
\end{itemize}
\end{proof}

We have used this result in \cite[Proposition 3.2]{Mu-jems}, where we have a symplectic torus $T$ and a symplectic sphere $D$ with $T^2=0$, $D^2=-2$, $T \cdot D=3$, and we construct a symplectic surface $T_1$ representing the divisor $3D+2T$, and intersecting $D$ and $T$ transversely and positively. In particular, $T_1$ is disjoint from $D$. This fact is important later when $D$ is contracted to a singular point, since $T_1$ avoids the singularity.

We also use it in
\cite[Proposition 3.3]{Mu-jems} where we have a chain of
symplectic surfaces $F$, $E_1$, $C_1,\ldots, C_8$, $C_1',\ldots, C_8'$, 
and we consider the symplectic divisor
$$
A = 2F+ 9 E_1 + 8(C_1+C_1')+7(C_2+C_2')+\dots + (C_8+C_8')\, .
$$
Hence by Theorem \ref{thm:divisors}, 
we obtain a symplectic surface $\Sigma \subset X$, 
in a neighbourhood of the
collection of curves, representing the class $A$.
As $A\cdot E_1=0$, $A\cdot C_i=0$ and $A\cdot C_i'=0$, for all $i$,
the 
resulting $\Sigma$ only intersects $F$. This is used later in \cite{Mu-jems} since
all curves $E_1,C_1,\ldots, C_8, C_1',\ldots, C_8'$ are contracted to a singular 
point, and $\Sigma$ will avoid this singularity.
This method to obtain $\S$ is simpler than the ad-hoc argument that appears in the
proof of \cite[Proposition 3.3]{Mu-jems}.

\end{document}